\theoremstyle{plain}
\newtheorem{theorem}{Theorem}[section]
\newtheorem{proposition}[theorem]{Proposition}
\newtheorem{corollary}[theorem]{Corollary}
\newtheorem{lemma}[theorem]{Lemma}
\newtheorem*{Theorem}{Theorem}
\DeclareMathOperator{\piprod}{\raisebox{-0.1em}{\huge{$\pi$}}\kern -0.2em}
\theoremstyle{definition}
\newtheorem{remark}[theorem]{Remark}
\newtheorem{remarks}[theorem]{Remarks}
\newtheorem*{example*}{Example}
\newtheorem*{Remark}{Remark}
\newcommand{\cg}{\mathcal {G}}
\newcommand{\ci}{\mathcal {I}}
\newcommand{\cp}{\mathcal {P}}
\newcommand{\car}{\mathcal {R}}
\newcommand{\cz}{\mathcal {Z}}
\newcommand{\nn}{{\mathbb N}}
\newcommand{\qq}{{\mathbb Q}}
\newcommand{\rr}{{\mathbb R}}
\newcommand{\zz}{{\mathbb Z}}
\newcommand{\ltwob}{L^2b}
\newcommand{\minus}{{-1}}
\newcommand{\dtwo}{\lfloor d/2 \rfloor}
\newcommand{\n}{{[n]}}
\newcommand{\ah}{\hat{h}}
\newcommand{\ua}{\underline{A}}
\newcommand{\ub}{\underline{B}}
\newcommand{\ug}{\underline{\gG}}
\newcommand{\uab}{(\ua,\ub)}
\newcommand{\zx}{\cz_X}
\newcommand{\zxg}{\cz_{X(G)}}
\newcommand{\tzxg}{\tilde{\cz}_{X(G)}}
\newcommand{\bF}{{\mathbf F}}
\newcommand{\ftwo}{{\mathbf F _2}}
\newcommand{\be}{{\mathbf e}}
\newcommand{\bq}{{\mathbf q}}
\newcommand{\bt}{{\mathbf t}}
\newcommand{\bone}{{\mathbf 1}}
\newcommand{\wt}{\widetilde}
\newcommand{\wH}{\widetilde {H}}
\newcommand{\wh}{\widehat}
\newcommand{\ul}{\underline}
\newcommand{\rh}{{\wt{H}\,}}
\def\clap#1{\hbox to 0pt{\hss#1\hss}}
\newcommand{\comment}[1]{}
\newcommand{\geps}{\varepsilon}
\newcommand{\gf}{\varphi}
\newcommand{\gr}{\rho}
\newcommand{\gs}{\sigma}
\newcommand{\gt}{\tau}
\newcommand{\gl}{\lambda}
\newcommand{\go}{\omega}
\newcommand{\gG}{\Gamma}
\newcommand{\gD}{\Delta}
\newcommand{\edge}{\operatorname{Edge}}
\newcommand{\Ends}{\operatorname{Ends}}
\newcommand{\Ext}{\operatorname{Ext}}
\newcommand{\Ker}{\operatorname{Ker}}
\newcommand{\Lk}{\operatorname{Lk}}
\newcommand{\cone}{\operatorname{Cone}}
\newcommand{\GR}{\operatorname{Gr}}
\newcommand{\cd}{\operatorname{cd}}
\newcommand{\vcd}{\operatorname{vcd}}
\newcommand{\grh}{\GR{H}}
\newcommand{\aas}{\mathrm{w.h.p.}}
\newcommand{\eaas}{\mathrm{w.h.p}}
\newenvironment{enumerate1}{
\begin{enumerate}[\upshape (1)]}%
	{
\end{enumerate}
}
\newenvironment{enumerate1*}{
\begin{enumerate}[\upshape (*1)]}%
	{
\end{enumerate}
}
	{
\end{enumerate}
}
\newenvironment{enumeratea}{
\begin{enumerate}[\upshape (a)]}{
\end{enumerate}
}
\newenvironment{enumeratea'}{
\begin{enumerate}[\upshape (a)$'$]}{
\end{enumerate}
}
\newcommand{\expect}{\mathbb{E}}
\newcommand{\prob}{\mathbf{Pr}}
\newtheorem{theo+}{Theorem}
\newtheorem{prop+}{Proposition}
\newtheorem{coro+}{Corollary}
\newtheorem{lemm+}{Lemma}
\theoremstyle{definition}
\newtheorem{defi+}{Definition}
\theoremstyle{remark}
\newtheorem{rema+}{Remark}
\DeclareMathOperator{\I}{\bf I}
 \numberwithin{equation}{section} \setcounter{section}{0}
  \definecolor{colore}{cmyk}{0,1,0.6,0}
  \definecolor{coloregen}{cmyk}{0.7,0,1,0}
  \definecolor{coloresimo}{cmyk}{1,0.6,0,0}
  \definecolor{colore}{cmyk}{0,0,0,1}
  \definecolor{coloregen}{cmyk}{0,0,0,1}
  \definecolor{coloresimo}{cmyk}{0,0,0,1}
\begin{document}

\title{Random graph products of finite groups are rational duality groups }
\author{Michael W. Davis\thanks{Partially supported by NSF grant DMS 1007068 and the Institute for Advanced Study.} \and{Matthew Kahle}\thanks{Partially supported by the Institute for Advanced Study.}}
\date{\today} \maketitle
\begin{abstract} Given an edge-independent random graph $G(n,p)$, we determine various facts about the cohomology of graph products of groups for the graph $G(n,p)$.  In particular, the random graph product of a sequence of finite groups is a rational duality group with probability tending to $1$ as $n \to \infty$.  This includes random right angled Coxeter groups as a special case. 
	\smallskip

	\paragraph{AMS classification numbers.} Primary: 05C10, 05C80,  20F36, 20F55, 20F65, 20J06 \\
	Secondary: 57M07. 
	
	\smallskip

	\paragraph{Keywords:} clique complex, duality group, flag complex, graph product, polyhedral product, random graph, right-angled Artin group, right-angled Coxeter group, $L^2$-Betti number.
\end{abstract}

\section*{Introduction}\addcontentsline{toc}{section}{Introduction}

A simplicial graph $G$ determines a simplicial complex $X(G)$, called its \emph{flag complex} (or its ``clique complex").  The simplices of $X(G)$ are the complete subgraphs of $G$.  

Given a sequence $\ug = (\gG_i)_{i\in \nn}$ of discrete groups indexed by the natural numbers and a graph $G$ with vertex set $[n]$ (where $[n]:=\{1,\dots , n\}$), we construct a new group $\cg$ ($=\cg(G,\ug)$), called the \emph{graph product},  by taking the  free product of the  $\gG_i$, $i \in [n]$, and then imposing the relations that elements in $\gG_i$ commute with elements of  $\gG_j$ whenever $\{i,j\}\in \edge(G)$.  We are mainly interested in the case where $\ug$ is the constant sequence $\gG_i=\gG$, for some group $\gG$.

It turns out that the cohomology of $\cg$ with coefficients in the group ring, $\zz \cg$, can be calculated in terms of (i) cohomology groups of the $\gG_i$ and (ii) the cohomology groups of $X(G)$ and various subcomplexes of $X(G)$ (cf.~\cite{dbook,ddjo1,ddjo2,ddjmo,do10,jm}).  With trivial coefficients, the (co)homology groups of $\cg$ depend only on the $f$-vector of $X(G)$ (that is, the number of simplices of $X(G)$ in any given dimension) and the (co)homology groups of the $\gG_i$.

The {\it edge-independent random graph} is the probability space $G(n,p)$, defined as follows.
For a real number $0 \le p \le 1$ and natural number $n$, $G(n,p)$ is the set of all graphs on vertex set $[n]$ with probability measure defined by 
$$\prob (G) = p^{e_G} (1-p)^{{n \choose 2} - e_G},$$
where $e_G$ denotes the number of edges in $G$.  It can be viewed as the result of $n \choose 2$ independent coin flippings
, i.e., $G(n,p)$ is the probability space of all graphs on vertex set $[n]$ where each edge is included with uniform probability $p$, jointly independently.  \footnote{This is sometimes called the ``Erd\H{o}s--R\'enyi'' random graph, even though Erd\H{o}s and R\'enyi were interested in a different but closely related model, $G(n,m)$.}
The \emph{random flag complex with edge probability $p$}  is $X(n,p):=X(G(n,p))$.  In other words, it is the same probability space as $G(n,p)$ except that its elements are regarded as flag complexes rather than graphs. Similarly, the \emph{random graph product for $\ug$} is the group $\cg(n,p,\ug)$ associated to $G(n,p)$ and $\ug$.  

The groups $\cg(n,p,\ug)$ were considered previously by Charney--Farber \cite{chf}.  Somewhat earlier, Costa--Farber \cite{cof} had looked at  the special case of the random right-angled Artin group $A_{G(n,p)}$.  A formula for the cohomological dimension of $A_{G(n,p)}$  ($=1+\dim X(n,p)$) in terms of $(n,p)$ can be found in \cite{cof},  as well as, a formula for the ``topological complexity'' of its classifying space.  It is noted in \cite{chf} that if each $\gG_i$ is finite, then the graph product $\cg(n,p,\ug)$ is word hyperbolic if and only if $G(n,p)$ has no empty (induced) $4$-cycles; furthermore, it is determined when this condition holds ``with high probability.'' 

We will write $G\sim G(n,p)$ to mean that $G$ is chosen according to the distribution $G(n,p)$. 

In random graph theory one often lets $p$ depend on $n$.  For a given sequence $p = p(n)$, a graph property $\mathcal{Q}$ is said to hold  \emph{with high probability} (abbreviated $\aas$) if $$ \prob [G(n,p) \in \mathcal{Q}] \to 1$$ as $n\to \infty$.

We will use Bachmann--Landau and related notations.  Big $O$ and little $o$ are standard.  We also use $\Omega$ and $\omega$, defined as follows:  $f = \Omega(g) $ if and only if $g = O(f)$, and $f = \omega (g) $ if and only if $g= o(f)$. Whenever we use asymptotic notation such as big $O$ or little $o$, it is  understood to be as the number of vertices $n \to \infty$.  In slightly nonstandard notation, we will write $f \ll g$ if there exists a constant $\epsilon > 0$ such that $ f/ g = o(n^{-\epsilon})$ (in other words, $n^\epsilon f/g \to 0$ as $n\to \infty$).

A standard result in random graph theory is that if $d$ is a fixed positive integer and $$\omega \left( \frac{1}{n^{2/d}} \right) \le p \le o \left( \frac{1}{n^{2/(d+1)}} \right), $$ then $G(n,p)$ $\aas$ has cliques of order $d+1$ but not of order $d+2$. In other words, $X(n,p)$ is $\aas$ $d$-dimensional.  

A fundamental result of Erd\H{o}s-R\'enyi is that if $G\sim G(n,p)$ where $$p \ge \frac{\log n + \omega(1)} {n},$$ then $G$ is connected $\eaas$.  This result was generalized to higher dimensions by the second author in \cite{clique}, \cite{k12}.  Roughly, for the
random flag complex $X\sim X(n,p)$ of dimension $d$, we have $\aas$ that the reduced (co)homology, $\wH_*(X;\qq)$, is concentrated in degree $\dtwo$ (where $\lfloor x \rfloor$ means the greatest integer not exceeding $x$).  Moreover, with integer coefficients,
$H_i(X)=0$ for $i \le \lfloor (d-2)/4 \rfloor$ and $i > \lfloor d/2 \rfloor$.  (Our convention is that, when not specified, the coefficients of  (co)homology groups are assumed to be in $\zz$.)
In \S\ref{s:rflag} we strengthen these results by showing that the same is true $\aas$ for the homology of the ``punctured complex'' $X-\gs$  for all simplices $\gs$ of $X$.  (Here $X-\gs$ means the full subcomplex of $X$ spanned by all vertices which are not in $\gs$.)

Calculations of the cohomology of a graph product $\cg= \cg(G,\ug)$ with coefficients in its group ring or its group von Neumann algebra were done in  \cite{ddjo1,ddjo2,do10}.  An interesting feature is that there are essentially two different formulas depending on whether all $\gG_i$ are finite or all are infinite.  (In the mixed case the formulas are more complicated.)  When all $\gG_i$ are finite the formulas are in terms of the subcomplexes $X(G)-\gs$, where $\gs$ ranges over the simplices of $X(G)$ (including the empty simplex).  
These formulas are recalled in Propositions~\ref{p:gpringfin} and \ref{p:finL2} in \S\ref{ss:gpring} and \S\ref{ss:L2betti} below.  When all the $\gG_i$ are infinite, different formulas are needed, cf.~\cite{do10}. These formulas are expressed in terms of $H^*(\Lk(\gs,X(G)))$ and cohomology groups of the $\gG_i$ with appropriate coefficients. (Here $\Lk(\gs,X(G))$ denotes the link of a simplex $\gs$ in $X(G)$.)  The precise formulas are recalled in Propositions~\ref{p:gpringinf} and \ref{p:infL2} below.

This paper is organized as follows.  In \S\ref{s:coh} we review the formulas for the cohomology of graph products of groups.  In \S\ref{s:rflag} we review the results of \cite{clique, k12} on the cohomology of $X\sim X(n,p)$. Finally, 
in \S\ref{s:calc}, these results are combined to get fairly complete computations for the cohomology of random graph products of groups, $\cg:=\cg(G(n,p),\ug)$.  Beginning in \S\ref{s:rflag} we fix an integer $k\ge 0$ and impose the condition, 
	\begin{equation}\label{e:inequality}
	\frac{1}{n^{1/k}}\ll p \ll \frac{1}{n^{1/(k+1)}}. 
	\end{equation}
This condition entails $\dim X(n,p)=2k$ or $2k+1$, $\eaas$.  A striking consequence of our calculations is the following.

\begin{Theorem}\textup{(cf.~Theorem~\ref{t:Gfin}\,(\ref{i:middle})).}  Suppose $n^{-1/k}\ll p \ll n^{-1/(k+1)}$, for some given integer $k\ge 0$. Let $\cg=\cg(n,p,\ug)$ be a random graph product of finite groups.  Then $\aas$ $H^i(\cg;\qq\cg)$ is nonzero only for $i=k +1$ (where $k$ is the middle dimension of the random flag complex $X\sim X(n,p)$).  In other words, $\cg$ is a duality group over $\qq$ of formal dimension $k +1$.
\end{Theorem}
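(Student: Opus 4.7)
The plan is to combine the algebraic formula for $H^*(\cg;\zz\cg)$ recalled in Proposition~\ref{p:gpringfin} with the probabilistic concentration results for punctured random flag complexes stated in Section~\ref{s:rflag}. The whole argument is essentially a bookkeeping exercise once both ingredients are in hand, so I will focus on showing how they fit together and where the real work lies.

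First, I would start from Proposition~\ref{p:gpringfin}. For a graph product $\cg=\cg(G,\ug)$ with all $\gG_i$ finite, this proposition expresses $H^k(\cg;\zz\cg)$ as a $\cg$-module direct sum indexed by the simplices $\gs$ of $X(G)$ (including $\gs=\emptyset$), with the summand corresponding to $\gs$ being a tensor product of a free module over a finite group with the reduced cohomology $\wt{H}^{k-1}(X(G)-\gs;\zz)$ of the punctured subcomplex. Tensoring with $\qq$ preserves this decomposition, so
\[
H^k(\cg;\qq\cg)\neq 0 \iff \wt{H}^{k-1}(X(G)-\gs;\qq)\neq 0 \text{ for some simplex } \gs \in X(G).
\]

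Second, I would invoke the main result of Section~\ref{s:rflag}. Under the hypothesis $n^{-2/d}\ll p \ll n^{-2/(d+1)}$, the discussion in the introduction states (and Section~\ref{s:rflag} establishes) that $\aas$ the random flag complex $X(n,p)$ has dimension $d$, and moreover that for \emph{every} simplex $\gs$ of $X(n,p)$ the reduced rational cohomology $\wt{H}^*(X(n,p)-\gs;\qq)$ is concentrated in degree $\dtwo$. Substituting this into the formula from Step 1 forces $k-1=\dtwo$, so $H^k(\cg;\qq\cg)=0$ for $k\neq \dtwo+1$. Since the cohomology with group-ring coefficients is concentrated in the single degree $\dtwo+1$ and is nonzero there (nonvanishing follows by taking $\gs=\emptyset$, using that $\wt{H}^{\dtwo}(X(n,p);\qq)\neq 0$ $\aas$ in the same regime), $\cg$ is a rational duality group of formal dimension $\dtwo+1$ by the standard characterization of duality groups over $\qq$.

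The main obstacle will be Step 2: obtaining the punctured-complex concentration simultaneously for every simplex $\gs$. For a single $\gs$, the complex $X(n,p)-\gs$ is essentially a random flag complex on a smaller vertex set, so the methods of \cite{clique,k12} apply. The delicate point is that this needs to hold $\aas$ \emph{uniformly} over all simplices of $X(n,p)$, and the number of simplices of dimension at most $d$ in $X(n,p)$ is polynomial in $n$ under our density assumption. A union bound therefore suffices provided the probability that a fixed $X(n,p)-\gs$ fails concentration decays faster than any polynomial in $n$; establishing such a strong tail bound is precisely the technical content carried out in Section~\ref{s:rflag}. Once that uniform statement is in hand, the rest of the argument is immediate.
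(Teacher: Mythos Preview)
Your proposal is correct and follows essentially the same route as the paper: the paper does not write out a separate proof for this theorem, but Theorem~\ref{t:Gfin}\,(\ref{i:middle}) is obtained exactly by feeding the uniform punctured-complex concentration of Theorem~\ref{t:randombetti} into the decomposition of Proposition~\ref{p:gpringfin}, and the proof of Theorem~\ref{t:randombetti} is precisely the union-bound-over-all-simplices argument you describe. The only point worth sharpening is the nonvanishing in degree $\dtwo+1$: rather than asserting $\wt{H}^{\dtwo}(X;\qq)\neq 0$ directly, the paper deduces the sign of $\chi(\cg)$ from the dominance of $f_{\dtwo}$ (see Remark~(c) after Theorem~\ref{t:Gfin}), which forces the single surviving cohomology group to be nonzero.
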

When all $\gG_i$ are infinite, different formulas establish the vanishing $\aas$ of $H^i(\cg;\qq \cg)$ for
$i<k +1$. (However, in degrees $> k+1$ the rational cohomology  can be nonzero.) This gives the following result.

\begin{Theorem}\textup{(cf.~Theorem~\ref{t:infinite}\,(1)).} 
Suppose $n^{-1/k}\ll p\ll n^{-1/(k+1)}$ for a given integer $k\ge 0$ and that $\cg$ is a random graph product of infinite groups. Then $\aas$ $H^i(\cg;\qq \cg)=0$ for $i<k +1$ and $H^{k+1}(\cg;\qq \cg)\neq 0$
\end{Theorem}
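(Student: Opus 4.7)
The plan is to invoke Proposition~\ref{p:gpringinf}, which, when every $\gG_i$ is infinite, expresses
$$H^k(\cg;\qq\cg)\;\cong\; \bigoplus_{\gs\in X(G)}\ \bigoplus_{r+s=k-1}\wH^{r}(\Lk(\gs,X(G));\qq)\otimes H^{s}(\gG_\gs;\qq\gG_\gs),$$
where $\gG_\gs:=\prod_{v\in\gs}\gG_v$ and $|\gs|$ denotes the number of vertices of $\gs$.  Since each $\gG_v$ is infinite, $H^{0}(\gG_v;\qq\gG_v)=0$, so $H^{s}(\gG_\gs;\qq\gG_\gs)$ vanishes for $s<|\gs|$.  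Hence the $\gs$-summand contributes to $H^k(\cg;\qq\cg)$ only when $k\ge|\gs|+r_\gs+1$, where $r_\gs$ is the smallest degree with $\wH^{r_\gs}(\Lk(\gs,X(G));\qq)\neq 0$.

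The core step is to show, $\eaas$ and uniformly over all simplices $\gs$ of $X(n,p)$, that $\wH^{r}(\Lk(\gs,X(n,p));\qq)=0$ for every $r<\lfloor d/2\rfloor-|\gs|$.  Conditional on $\gs\in X(n,p)$, the common neighborhood $N_\gs$ of the vertex set of $\gs$ has cardinality concentrated about $np^{|\gs|}$ by a Chernoff bound, and the edges inside $N_\gs$ remain independent Bernoulli$(p)$, so $\Lk(\gs,X(n,p))$ is distributed as the random flag complex $X(|N_\gs|,p)$.  A short algebraic check verifies that the hypothesis $\less$ forces $\lesslink$ with $n'=|N_\gs|$ and $d'=d-2|\gs|$, so the concentration theorem of \S\ref{s:rflag} applies to the link and gives $\wH^{*}(\Lk(\gs,X(n,p));\qq)$ concentrated in degree $\lfloor d'/2\rfloor=\lfloor d/2\rfloor-|\gs|$.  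A union bound over the polynomially many simplices of $X(n,p)$ yields the simultaneous statement.

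Plugging the link concentration into the formula, every summand contributes to $H^k(\cg;\qq\cg)$ only in degrees
$$k\;\ge\;|\gs|+\bigl(\lfloor d/2\rfloor-|\gs|\bigr)+1\;=\;\lfloor d/2\rfloor+1,$$
independent of $|\gs|$.  Summing over all simplices gives $H^k(\cg;\qq\cg)=0$ $\eaas$ for $k<\lfloor d/2\rfloor+1$, as desired.

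The main obstacle is the uniformity in step two: the vanishing theorem of \S\ref{s:rflag} is naturally stated for a single random flag complex, whereas here one must simultaneously control the link of every simplex of $X(n,p)$ on a single probability space.  Since the probability estimates in \S\ref{s:rflag} decay polynomially in $n$ and $X(n,p)$ has at most $O(n^{d+1})$ simplices $\eaas$, a union bound closes this gap.  A secondary subtlety is the range $|\gs|>d/2$, where $d'=d-2|\gs|$ becomes negative and $\Lk(\gs,X(n,p))$ is $\eaas$ empty or very low-dimensional; there the trivial bound $k\ge|\gs|+1>\lfloor d/2\rfloor+1$ already places any potential contribution safely above the vanishing range.
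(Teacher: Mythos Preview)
Your approach is essentially the same as the paper's: both invoke Proposition~\ref{p:gpringinf}, use that $H^s(\gG_\gs;\qq\gG_\gs)=0$ for $s<|\gs|$ (from K\"unneth and infiniteness of each $\gG_i$), and then feed in the vanishing of $\wH^r(\Lk(\gs);\qq)$ for $r<\lfloor d/2\rfloor-|\gs|$ together with a union bound over the polynomially many simplices. The paper packages the link-vanishing step as a separate result (Theorem~\ref{t:links}) and cites it, whereas you rederive it inline by identifying $\Lk(\gs)$ conditionally as $X(n',p)$ with $n'\approx np^{|\gs|}$ and checking that $\less$ translates to $(n')^{-2/d'}\ll p\ll (n')^{-2/(d'+1)}$ with $d'=d-2|\gs|$; both routes ultimately rest on Garland's method plus the spectral-gap estimate with polynomially small failure probability.

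Two small points. First, Proposition~\ref{p:gpringinf} gives only the \emph{associated graded} $\grh^*(\cg;\qq\cg)$, not a direct-sum decomposition of $H^*(\cg;\qq\cg)$ as you wrote; this does not matter for the vanishing conclusion, since vanishing of $\grh^k$ forces vanishing of $H^k$. Second, for the target statement you only need the \emph{lower} vanishing of link cohomology (below $\lfloor d'/2\rfloor$), which is exactly the part controlled by Theorem~\ref{thm:ergap} with failure probability $o(n^{-\alpha})$ for every $\alpha$; you need not invoke the upper-half concentration for the links.
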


The first theorem applies to random right-angled Coxeter groups, the second to random right-angled Artin groups.

In either case (where all $\gG_i$ are finite or all are infinite), similar calculations give $\aas$ the virtual cohomological dimension of $\cg$, the number of its ends and, at least in some cases, its $L^2$-Betti numbers.  For example, in the case of random right-angled Artin groups we have the following.

\begin{Theorem}\textup{(cf.~Corollary~\ref{cor:raag}\,(3)).}
Suppose $n^{-1/k}\ll p \ll n^{-1/(k+1)}$, for a given integer $k\ge 0$.  Let $A_G$ be the random right-angled Artin group associated to $G\sim G(n,p)$. Then $\aas$ $\ltwob_i(A_G)$ is nonzero if and only if $i=k+1$.
\end{Theorem}
Our thanks go to the referee for some helpful comments.

\section{Cohomology of graph products}\label{s:coh}
\subsection{The $f$ and $h$ polynomials}\label{ss:fh}
Let $[n]:=\{1,\dots , n\}$.  Suppose $X$ is a simplicial complex on vertex set $[n]$.  We identify a simplex $\gs$ with its vertex set.  Following common practice, we shall blur the distinction between a simplicial complex as a poset of simplices or as a topological space and write $X$ for either.  By convention, the empty set is considered a simplex in any simplicial complex.  
Given $\gs \in X$, its \emph{link}, denoted $\Lk(\gs,X)$  (or sometimes simply $\Lk(\gs)$), is the simplicial complex whose poset of nonempty simplices is isomorphic to $X_{>\gs}$ ($:=\{\gt\in X \mid \gt >\gs\}$).

Let  $\cp(I)$ denote the power set of a finite set $I$. Given an $I$-tuple $\bt=(t_i)_{i\in I}$ of indeterminates and $J\in \cp(I)$, define a monomial $\bt_J$ by
	\begin{equation}\label{e:not}
	\bt_J=\prod_{j\in J} t_j
	\end{equation}
The \emph{$f$-polynomial} of $X$ is the polynomial in $\bt=(t_i)_{i\in \n}$ defined by
	\begin{equation*}\label{e:f}
	f_X(\bt):=\sum_{\gs\in X} \bt_{\gs} .
	\end{equation*}
The \emph{$\ah$-polynomial} of $X$ is defined by
	\begin{equation}\label{e:ah}
	\ah_X(\bt):=(\bone-\bt)_\n \,f_X\left(\frac{\bt}{\bone -\bt}\right),
	\end{equation}
where $\bone$ denotes the constant $n$-tuple $(1)_{i \in \n}$.  
If $\bt$ is the constant indeterminate given by $t_i=t$, then $f_X$ is a polynomial in one variable.   Denote it by $f_X(t)$.   If $\dim X= d$, then 
	\begin{equation*}\label{e:fsingle}
	f_X(t)=\sum_{i=-1}^{d} f_i(X) t^{i+1}.
	\end{equation*}
where $f_i(X)$ is the number of $i$-simplices in $X$ (and $f_\minus(X)=1$, the number of empty simplices).  The \emph{$h$-polynomial} of $X$ is then defined by  
	\begin{equation*}\label{e:hsingle}
	h_X(t):= \ah_X(t)/(1-t)^{n-d-1}=(1-t)^{d+1} f_X\left(\frac{t}{1-t}\right)\,.
	\end{equation*}
	
\subsection{(Co)homology of polyhedral products}\label{ss:pp}
As before,  $X$ is a simplicial complex with vertex set $[n]$.  Suppose $\uab = \{(A_i,B_i)\}_{i\in [n]}$ is a collection of pairs of nonempty subspaces.  For a point $x$ in the  Cartesian product, $\prod_{i=1}^n A_i$,  put $\gs(x):=\{i\in [n]\mid x_i \in A_i -B_i\}$.  The \emph{polyhedral product}, $\zx\uab$, is defined by
	\begin{equation*}
	\zx\uab:= \{x\in \prod_{i=1}^n A_i \mid \gs(x)\in X\}.
	\end{equation*}
($\gs(x)=\emptyset$ is allowed.)  
When all the $(A_i,B_i)$ are all equal to the same pair $(A,B)$, we write $\zx(A,B)$ for the polyhedral product.  The (co)homology of these spaces can be calculated.  The formulas simplify if either 1) each $B_i$ is contractible (e.g., if $B_i$ is a base point $*_i$) or 2) each $A_i$ is contractible (cf.~\cite{bbcg2}).  
If each $B_i$ is contractible, then
	\begin{equation}\label{e:pp0}
	\wH_*(\zx\uab)=\bigoplus_{\gs\in X} \wH_*(\wh{\ua}^{\gs}),
	\end{equation}
where $\wh{\ua}^{\gs}$ denotes the $\gs$-fold smash product of the $A_i$.  (See \cite[Thm.~2.15]{bbcg2}.) By using the K\"unneth Formula, the (co)homology of $\wh{\ua}^{\gs}$ can be calculated from that of the $A_i$.  The 
formula  is simplified if we take with coefficients in a field $\bF$.
Using \eqref{e:pp0}, we see  that there is an isomorphism of algebras:
	\begin{equation}\label{e:pp1}
	H^*(\zx\uab;\bF)=\left[\bigotimes_{i=1}^{m} H^*(A_i;\bF)\right] / \,\ci (X),
	\end{equation}
where $\ci(X)$, the \emph{generalized Stanley-Reisner ideal}, is the ideal in the tensor product of algebras generated by all $x_{i_1}\otimes\cdots \otimes x_{i_l}$, such that $x_{i_k}\in \rh^*(A_{i_k};\bF)$ and such that  $\{i_1, \dots ,i_l\}$ is not a simplex of $X$
(\cite {densuc} or \cite[Thm.~2.34]{bbcg2}).  The right hand side of \eqref{e:pp1} is the \emph{generalized face ring}.

On the other hand, when each $A_i$ is contractible the formula is
	\begin{equation}\label{e:pp2}
	H^*(\zx\uab)=\bigoplus_{\substack{I\le [n]\\I \text{ is not a simplex of }X}} H^*(X(I)*\wh{\ub}^I),
	\end{equation}
where $X(I)$ denotes the full subcomplex spanned by $I$, $\wh{\ub}^I$ denotes the $I$-fold smash product of copies of the $B_i$, and $X(I)*\wh{B}^I$ denotes their join. (Again, each summand on the right hand side can be computed from the K\"unneth Formula.) 
If each $B_i$ is connected and simply connected, then the fundamental group of $\zx\uab$ is the graph product $\cg(G;\ug)$, where the graph $G$ is the $1$-skeleton of $X$ and $\gG_i=\pi_1(A_i)$ (cf.~\cite{d11}).  If each $\gG_i$ is infinite, then the argument of \cite{do10} shows
	\begin{equation}\label{e:gr1}
			\grh^m(\zx\uab;\zz \cg)=\bigoplus_{\substack{\gs\in  X\\
			i+j=m}} H^{i}(\cone \Lk(\gs),\Lk(\gs); H^j(\ua^{\gs};\zz \cg)),
	\end{equation}
Here $\GR$ means the  ``associated graded'' group (because each summand in  \eqref{e:gr1} is the $E^{i,j}_\infty$ term of a spectral sequence). Also, $\ua^{\gs}$ stands for the $\gs$-fold product $\prod_{i\in \gs} A_i$ so that the coefficients in a summand on the right hand side of \eqref{e:gr1} can be calculated from the K\"unneth Formula.  Indeed,  once we replace $\zz$ by a field $\bF$, we get
\[
H^*(\ua^{\gs};\bF \cg))=\left[\bigotimes_{i\in \gs} H^*(A_i;\bF \gG_i)\right]\otimes _{\gG_i} \bF\cg .
\]

\subsection{Polyhedral products as classifying spaces for  graph products}
Our interest in the polyhedral product construction stems from its relationship to graph products of groups.  Given a graph $G$ with vertex set $[n]$ and a collection of discrete groups $\ug=\{\gG_i\}_{i\in \nn}$, let $\cg$ ($=\cg(G,\ug)$) denote their graph product. For any  subset $I\le \n$, let $\ug^I$ denote the ordinary product, $\prod_{i\in I} \gG_i$.  
Let $B\gG_i$ denote the classifying space for $\gG_i$ (i.e., $B\gG_i$ is a $K(\gG_i,1)$ complex).  We consider two cases:  $(A_i, B_i)=(B\gG_i,*_i)$ (which we denote $(B\ug,\ul{*})$ and $(A_i,B_i)=(\cone \gG_i,\gG_i)$ (denoted  $(\cone \ug, \ug)$).

\begin{proposition}\label{p:pp}
\textup{(\cite{do10}).}   
Suppose, as above, $G$ is a graph with vertex set $[n]$, $X(G)$ is its flag complex, and $\cg$ is the graph product of the $(\gG_i)_{i\in [n]}$.
\begin{enumerate1}
\item
$B\cg = \zxg (B\ug,\ul{*})$.
\item
Let $\cg_0$ denote the kernel of the natural map $\cg \to \ug^{[n]}$ to the direct product.  Then $B\cg_0=\zxg(\cone \ug, \ug)$.
\end{enumerate1}
\end{proposition}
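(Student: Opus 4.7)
The plan is to prove part (1) by combining the van Kampen-type fundamental group calculation (already recalled in the discussion preceding the proposition) with an asphericity argument, and then to deduce part (2) via a covering-space argument built from a natural free action of $\ug^{[n]}$ on a suitable cover.

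For part (1), $\pi_1(\zxg(B\ug,\ul{*})) \cong \cg$ follows immediately from the statement recalled just above the proposition: each basepoint $*_i$ is simply connected, so that statement applies and produces the presentation of the graph product $\cg(G,\ug)$ from the $2$-skeleton of $\zxg(B\ug,\ul{*})$ (the vertex pieces $B\gG_i$ supply the relations of each $\gG_i$, the edge pieces $B\gG_i \times B\gG_j$ force $[g_i,g_j]=1$ when $\{i,j\}\in \edge(G)$, and higher simplices contribute no new relations). The remaining task is asphericity. I would invoke the standard flag-complex asphericity criterion: provided each $B\gG_i$ is aspherical and $X(G)$ is flag, the polyhedral product $\zxg(B\ug,\ul{*})$ is aspherical. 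The flagness of $X(G)$ is built into the construction, and a clean way to prove the criterion is to place a locally CAT(0) piecewise-Euclidean (cubical) metric on the universal cover, in which the flag hypothesis on $X(G)$ is exactly what forces Gromov's link condition. This identifies $\zxg(B\ug,\ul{*})$ with $B\cg$.

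For part (2), take the universal cover $E\gG_i \to B\gG_i$ and identify the preimage of $*_i$ with $\gG_i$ sitting as a discrete subspace of $E\gG_i$. Since both $E\gG_i$ and $\cone \gG_i$ are contractible and contain $\gG_i$ as a discrete subspace, the two pairs $(E\gG_i,\gG_i)$ and $(\cone \gG_i,\gG_i)$ are weakly equivalent; concretely, the inclusion $\gG_i \hookrightarrow E\gG_i$ extends (by contractibility of $E\gG_i$) to a map of pairs $(\cone \gG_i,\gG_i) \to (E\gG_i,\gG_i)$. By the homotopy invariance of the polyhedral product construction in maps of pairs (Bahri--Bendersky--Cohen--Gitler), this induces a homotopy equivalence
\[
\zxg(\cone \ug,\ug) \;\simeq\; \zxg(E\ug,\ug).
\]
Next, $\ug^{[n]}$ acts freely and coordinatewise on $\zxg(E\ug,\ug)$ (the free action on each factor $E\gG_i$ restricts to the invariant subspace), and the quotient is $\zxg(B\ug,\ul{*}) = B\cg$ by part (1). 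Hence $\zxg(E\ug,\ug) \to B\cg$ is a regular covering with deck group $\ug^{[n]}$, necessarily the cover corresponding to the kernel of the induced map $\cg = \pi_1(B\cg) \to \ug^{[n]}$, i.e., corresponding to $\cg_0$. Therefore $\zxg(\cone \ug,\ug) \simeq B\cg_0$.

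The principal obstacle is the asphericity step in part (1); part (2) is then essentially a formal covering-space argument that pairs the contractibility of $\cone \gG_i$ with the known contractibility of $E\gG_i$ to swap models without changing the homotopy type of the polyhedral product.
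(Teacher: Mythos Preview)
Your overall strategy is sound and the ingredients are correct, but the logical order is reversed from the paper's, and your sketch of the asphericity step has a wrinkle worth noting.

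The paper proves (2) first: the group $\ug^{[n]}$ acts on $\zxg(\cone\ug,\ug)$, and the universal cover $\tzxg(\cone\ug,\ug)$ is identified with the standard realization of a right-angled building (citing \cite{d11}), hence is contractible. Since $\cg$ is the group of lifts and $\cg_0$ is the deck group, (2) follows. Then (1) is deduced by exactly the covering argument you give for (2): $\zxg(\cone\ug,\ug)\simeq \zxg(E\ug,\ug)$ is the $\ug^{[n]}$-cover of $\zxg(B\ug,\ul{*})$, so the latter has contractible universal cover and hence equals $B\cg$.

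You instead prove (1) first by citing the ``standard flag-complex asphericity criterion'' and then deduce (2). That is legitimate if you take the criterion as a black box. However, your suggested justification---a locally CAT(0) \emph{cubical} metric on the universal cover of $\zxg(B\ug,\ul{*})$---does not apply as stated: for arbitrary $\gG_i$, the space $\zxg(B\ug,\ul{*})$ is not a cube complex, and its universal cover carries no obvious cubical structure. The cubical structure you have in mind lives instead on $\zxg(\cone\ug,\ug)$: each $\cone\gG_i$ is a tree (a $1$-dimensional cube complex), the polyhedral product is then a genuine cube complex, and its vertex links are joins of discrete sets with links in $X(G)$, hence flag exactly because $X(G)$ is flag. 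That is precisely the right-angled building argument the paper invokes. So your ``clean way to prove the criterion'' actually routes through the cone model and hence through (2), which is why the paper's order is the more natural one.
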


\begin{proof}[Sketch of proof]
One first proves (2).  The group $\ug^{[n]}$ acts on $\zxg(\cone \ug, \ug)$ and $\cg$ can be identified with the group of all lifts of elements in $\gG^{[n]}$ to the universal cover $\tzxg(\cone\ug, \ug)$.  Since $X(G)$ is a flag complex, $\tzxg(\cone \ug,\ug)$ is the standard realization of a right-angled building (cf.~\cite[Prop.\,2.10]{d11}) and hence, is contractible.  Since $\cg_0$ is the group of covering transformations, statement (2) follows.  To prove (1), first observe that $\zxg(\cone \ug, \ug)$ is homotopy equivalent to the covering space of $\zxg (B\ug,\ul{*})$ corresponding to the subgroup $\cg_0$.  Next observe that $\zxg(E\ug,\ug)$ is homotopy equivalent to $\zxg(\cone \ug, \ug)$, where $E\gG_i$ is the universal cover of $B\gG_i$ and $E\ug:=\{E\gG_i\}_{i\in [n]}$.  Hence, the universal cover of $\zxg(E\ug, \ug)$ is also contractible and so, can be identified with $E\cg$, which proves (1).
\end{proof}

\subsection{Homology with trivial coefficients}\label{ss:trivial}
Notation is as before.  
Given a subset $I\le \n$ and  a field $\bF$, the dimension of the following tensor product in degree $m$ is denoted by 
	\begin{equation*}\label{e:bm}
	b_{I,m}(\ug;\bF):=\dim_\bF \big(\bigotimes_{i\in I} \rh^*(B\gG_i;\bF)\big)^m.
	\end{equation*}
In other words, $b_{I,m}(\gG;\bF)$ is the $m^{th}$ Betti number of the smash product of the  $B\gG_i$, $i\in I$.  When $\ug$ is the constant sequence $\gG_i=\gG$ and $k\in \nn$, put $b_{k,m}(\gG;\bF):=b_{[k],m}(\ug;\bF)$.  In the next proposition we use \eqref{e:pp0} and Proposition~\ref{p:pp} to compute the Betti numbers of $B\cg$.
\begin{proposition}\label{p:bcg1}
Let $b_m(B\cg;\bF):=\dim_\bF H_m(B\cg;\bF)$ be the $m^{th}$ Betti number of $B\cg$.  Then
	\[
	b_m(B\cg;\bF)=\sum_{\gs\in  X}b_{\gs,m}(\ug;\bF).
	\]
In particular, if $\ug$ is the constant sequence $\gG$, then
	\[
	b_m(B\cg;\bF)=\sum_{k=1}^m f_{k-1}b_{k,m}(\gG;\bF),
	\]
 where $f_{k-1}=f_{k-1}(X)$ is  the number of $(k-1)$-simplices in $X$.
\end{proposition}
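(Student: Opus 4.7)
The plan is to combine Proposition~\ref{p:pp}(1) with the polyhedral-product formula \eqref{e:pp0}. Proposition~\ref{p:pp}(1) identifies $B\cg$ with the polyhedral product $\zxg(B\ug,\ul{*})$, whose ``$B_i$'' slot is the contractible basepoint $*_i$. Thus the hypothesis of the first case in \S\ref{ss:pp} is satisfied, and \eqref{e:pp0} produces a direct-sum decomposition
\[
H_m(B\cg;\bF) \;=\; \bigoplus_{\gs \in X} H_m(\wh{\ua}^{\gs};\bF),
\]
where $\wh{\ua}^{\gs}$ denotes the $\gs$-fold smash product of the factors $A_i = B\gG_i$, $i \in \gs$.

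Next, the K\"unneth Formula for smash products with field coefficients identifies $\widetilde{H}_m(\wh{\ua}^{\gs};\bF)$ with the degree-$m$ piece of $\bigotimes_{i \in \gs} \rh^*(B\gG_i;\bF)$, whose dimension is $b_{\gs,m}(\ug;\bF)$ by the definition in \S\ref{ss:trivial}. Summing over $\gs \in X$ then yields the first claim of the proposition; the empty-simplex summand is handled by the convention that an empty smash product contributes $\bF$ in degree $0$, consistent with $B\cg$ being path connected.

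For the constant sequence $\gG_i = \gG$, the number $b_{\gs,m}(\ug;\bF)$ depends only on $k := |\gs|$ and equals $b_{k,m}(\gG;\bF)$. Grouping the simplices of $X$ by cardinality gives
\[
b_m(B\cg;\bF) \;=\; \sum_{k \ge 0} f_{k-1}(X)\, b_{k,m}(\gG;\bF).
\]
Because $B\gG$ is path connected one has $\rh^0(B\gG;\bF)=0$, so the tensor power $\rh^*(B\gG;\bF)^{\otimes k}$ is concentrated in degrees $\ge k$; hence $b_{k,m}(\gG;\bF)=0$ whenever $k>m$, and for $m \ge 1$ the $k=0$ term also vanishes. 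These two observations together truncate the sum to $1 \le k \le m$, matching the stated formula.

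No substantive obstacle is expected here: once Proposition~\ref{p:pp} and \eqref{e:pp0} are in hand, the argument is essentially pure bookkeeping on the K\"unneth decomposition. The only mild subtlety is the reduced-versus-unreduced convention governing the empty-simplex summand, which is invisible as soon as $m \ge 1$.
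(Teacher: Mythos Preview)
Your proposal is correct and follows exactly the approach the paper indicates: the sentence preceding the proposition says it is obtained from \eqref{e:pp0} and Proposition~\ref{p:pp}, and no further proof is given. Your write-up simply fills in the routine K\"unneth and counting details, including the harmless reduced/unreduced bookkeeping for the empty simplex.
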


For example, if $\gG=\zz/2$, then $\cg(G;\zz/2)=W_G$, the right-angled Coxeter group associated to $G$ and $BW_G=\zxg(B(\zz/2),*)$.  Let $\bF_2$ be the field with 2 elements.  Since $H^*(\zz/2;\bF_2)$ is the polynomial ring $\bF_2[t]$, formula \eqref{e:pp1} and Proposition~\ref{p:pp} give the following result of \cite{dj91},
	\begin{equation*}\label{e:wg}
	H^*(BW_G;\bF_2)=\bF_2[X],
	\end{equation*}
where the right hand side denotes the Stanley-Reisner face ring of $X$.  It follows that the Poincar\'e series, $\sum b_i(BW_G;\ftwo) t^i$, is given by
	\begin{align}
	\sum_{i=0}^\infty b_i(BW_G;\ftwo) t^i &=
	\sum _{i=-1}^{d}\frac{f_i(X)t^{i+1}}{(1-t)^{i+1}}\notag\\
	&=f_X\left(\frac{t}{1-t}\right):=\frac{h_X(t)}{(1-t)^{d+1}},\label{e:cox}
	\end{align}
where $d=\dim X$.  

For another example, if $\gG=\zz$, then $\cg=A_G$, the right-angled Artin group associated to $G$.  Since $B\zz=S^1$,  Proposition~\ref{p:bcg1} yields
	\begin{equation}\label{e:ag}
	b_k(A_G;\bF)=f_{k-1}(X)
	\end{equation}
and this implies that $H^*(BA_G)=\bigwedge [X]$, the exterior face ring of $X$, cf.\ \cite{cd05}, \cite{kr}. Alternatively, we could have proved this (even with integral coefficients) by using formula \eqref{e:pp1} and Proposition~\ref{p:pp} as before.

\paragraph{Some definitions.}
A group $\gG$ is \emph{type F} if $B\gG$ has a model which is a finite CW complex.  If $\gG$ is type F, then it is 
automatically \emph{type FL},  which means that $\zz$ has a finite resolution by finitely generated free $\zz G$-modules. $\gG$ is \emph{type FP} if $\zz$ has a finite resolution by finitely generated projective $\zz G$-modules.  Similarly, for a commutative ring $R$, $\gG$ is type $\mathrm{FL}_R$ (resp. $\mathrm{FP}_R$) if $R$ has a finite resolution by finitely generated, free (resp. projective) $R\gG$-modules.
$\gG$ is \emph{virtually torsion-free} if it has a torsion-free subgroup $\gG_0$ of finite index.  A virtually torsion-free group $\gG$ is, respectively,  \emph{type VF, VFL or VFP} as $\gG_0$ is F, FL or FP.

If each $\gG_i$ is finite of order $q_i+1$, then we say $\ug$ \emph{has order $\bq+\bone$}, where
\(
\bq:=(q_i)_{i\in \nn}
\).
If $\cg_0$ denotes the subgroup of $\cg(G,\ug)$ defined in Proposition~\ref{p:pp}\,(ii), then $\cg_0$ is a torsion-free subgroup of finite index in $\cg$.  (In the notation \eqref{e:not} from \S\ref{ss:fh}, its index is $(\bone +\bq)_\n$.) 
By Proposition~\ref{p:pp}\,(ii), $B\cg_0=\zx(\cone \ug, \ug)$, which is a finite complex.  So, $\cg$ is type VF.  Applying \eqref{e:pp2}, we get the following.

\begin{proposition}\label{p:cg0}
Suppose each $\gG_i$ is finite and $\ug$ has order $\bq+\bone$.  Then 	\[
	H_*(B\cg_0)=\bigoplus_{\substack{I\le [n]\\I\notin  X}}H_*(\cone X(I), X(I)) \otimes M_I,
	\]
where $M_I$ is a free abelian group of rank $\bq_I$. (It is the ``Steinberg module'' for $\ug^I$, i.e., the $I$-fold tensor product of augmentation ideals of $\zz \gG_i$, $i\in I$.)
\end{proposition}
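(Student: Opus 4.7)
The plan is to combine Proposition~\ref{p:pp}(2) with the polyhedral product formula \eqref{e:pp2}, then unwind the join/smash terms using the fact that each $\gG_i$, viewed as a pointed discrete space, has all its reduced homology concentrated in degree $0$.

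First I would invoke Proposition~\ref{p:pp}(2) to identify $B\cg_0$ with $\zxg(\cone\ug,\ug)$, where the pairs are $(A_i,B_i) = (\cone\gG_i, \gG_i)$. Since each $A_i = \cone\gG_i$ is contractible, the hypothesis of \eqref{e:pp2} is satisfied. Taking homology rather than cohomology in that decomposition gives
\[
H_*(B\cg_0) \;=\; \bigoplus_{\substack{I\le [n]\\ I\notin X}} \widetilde{H}_*\bigl(X(I) * \wh{\ub}^I\bigr),
\]
so the task reduces to identifying each summand.

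Next I would compute $\widetilde{H}_*(\wh{\ub}^I)$. Each $B_i = \gG_i$ is a pointed discrete space with $q_i+1$ points, so $\widetilde{H}_0(\gG_i) \cong \zz^{q_i}$ and $\widetilde{H}_j(\gG_i) = 0$ for $j > 0$. Because these are free abelian, the K\"unneth formula for smash products yields, with no Tor contribution,
\[
\widetilde{H}_0(\wh{\ub}^I) \;=\; \bigotimes_{i\in I} \widetilde{H}_0(\gG_i) \;=:\; M_I,
\]
a free abelian group of rank $\bq_I$, with $\widetilde{H}_j(\wh{\ub}^I) = 0$ for $j \neq 0$. Now I would apply the standard join formula $\widetilde{H}_n(Y*Z) \cong \bigoplus_{i+j=n-1} \widetilde{H}_i(Y)\otimes \widetilde{H}_j(Z)$ (again Tor-free because $M_I$ is free): concentration of $\widetilde{H}_*(\wh{\ub}^I)$ in degree $0$ collapses the sum to
\[
\widetilde{H}_n\bigl(X(I)*\wh{\ub}^I\bigr) \;=\; \widetilde{H}_{n-1}(X(I))\otimes M_I.
\]
Finally, the long exact sequence of the pair $(\cone X(I), X(I))$ identifies $\widetilde{H}_{n-1}(X(I))$ with $H_n(\cone X(I), X(I))$, which assembles into the stated decomposition.

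The Steinberg-module assertion is a matter of unraveling coefficients: the augmentation ideal of $\zz\gG_i$ is precisely $\widetilde{H}_0(\gG_i;\zz)$ (the kernel of $\zz\gG_i \to \zz$), so tensoring over $i\in I$ identifies $M_I$ with the $I$-fold tensor product of augmentation ideals. There is no serious obstacle here; the only delicacy is bookkeeping indices and confirming that all relevant K\"unneth and join spectral sequences degenerate, which they do because everything in sight is free abelian and concentrated in a single degree.
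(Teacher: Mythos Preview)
Your proposal is correct and follows exactly the route the paper intends: the paper simply says ``Applying \eqref{e:pp2}, we get the following'' and states the proposition without further argument, so your write-up is a faithful (and more detailed) unwinding of that one-line justification via Proposition~\ref{p:pp}(2), the K\"unneth formula for the smash product of discrete spaces, and the join--suspension identification.
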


\begin{Remark}
If $G$ is not a complete graph, then there are distinct elements $i$, $j$ in $\n$ which are not connected by an edge; so,  $H_1(\cone X(I),X(I))=\zz$, for $I=\{i,j\}$. It follows that when $\cg_0$ is nontrivial, its abelianization maps onto $\zz$.  So, Proposition~\ref{p:cg0} implies that a graph product of finite groups never has Kazhdan's property T unless it is  finite.
\end{Remark}

Using \cite{dpe} we can compute the Euler characteristic of $\zxg(\cone \ug, \ug)$ as well as the ``orbihedral Euler characteristic'' of $\zxg(\cone \ug, \ug)/\ug^{[n]}$ (also called the \emph{rational Euler characteristic}, $\chi(\cg)$, of $\cg$).

\begin{proposition}\label{p:pe}\textup{(cf.~\cite{dpe}).} 
Suppose each $\gG_i$ is finite and $\ug$ has order $\bq+\bone$.  
\begin{enumerate1}
\item
The Euler characteristic of $B\cg_0$ is given by
	\begin{equation*}
	\chi(B\cg_0)=(\bq+\bone)_\n f_{X(G)}\left(\frac{-\bq}{\bq+\bone}\right)=\ah_{X(G)}(-\bq).
	\end{equation*}
\item	
The rational Euler characteristic of $\cg$ is given by
	\begin{equation*}
	\chi(\cg)=\frac{\chi(B\cg_0)}{(\bq+\bone)_\n} 
	=f_X\left(\frac{-\bq}{\bq+\bone}\right).
	\end{equation*}
\end{enumerate1}
\end{proposition}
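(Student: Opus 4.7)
The plan is to prove (1) directly by stratifying the polyhedral product identified in Proposition~\ref{p:pp}(2), and then deduce (2) via the index formula for the rational Euler characteristic. First I would invoke Proposition~\ref{p:pp}(2) to identify $B\cg_0$ with the polyhedral product $\zxg(\cone \ug, \ug)$, which is a finite CW complex. The strategy is to stratify it by the support map $x \mapsto \gs(x) = \{i \in \n : x_i \notin \gG_i\}$, whose image is exactly $X(G)$, and to compute $\chi(B\cg_0)$ as a sum over simplices of $X(G)$ using additivity of compactly supported Euler characteristic on locally closed CW strata.

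The key calculation is that the stratum over $\gs \in X(G)$ is the product $\cz_\gs = \prod_{i \in \gs}(\cone \gG_i - \gG_i) \times \prod_{i \notin \gs} \gG_i$, and its compactly supported Euler characteristic factors as $\chi_c(\cz_\gs) = (-\bq)_\gs (\bq+\bone)_{\n \setminus \gs}$; here $\chi_c(\cone \gG_i - \gG_i) = \chi(\cone \gG_i) - \chi(\gG_i) = 1 - (q_i + 1) = -q_i$ and $\chi(\gG_i) = q_i + 1$. Summing over $\gs \in X(G)$ and factoring $(\bq+\bone)_\n$ out of each term yields
\[
\chi(B\cg_0) = \sum_{\gs \in X(G)}(-\bq)_\gs (\bq+\bone)_{\n \setminus \gs} = (\bq+\bone)_\n f_{X(G)}\!\left(\frac{-\bq}{\bq+\bone}\right),
\]
which is the first form in (1). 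The second form, $\ah_{X(G)}(-\bq)$, is then immediate by substituting $\bt = -\bq$ in the defining identity \eqref{e:ah} for the $\ah$-polynomial.

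Once (1) is established, (2) follows at once: $\cg_0$ is a torsion-free subgroup of finite index $(\bq+\bone)_\n$ in $\cg$ with $B\cg_0$ a finite complex, so by the standard definition of the rational (orbihedral) Euler characteristic $\chi(\cg) = \chi(B\cg_0)/(\bq+\bone)_\n = f_{X(G)}(-\bq/(\bq+\bone))$. The main subtlety in this plan—more a bookkeeping point than a genuine obstacle—is the sign coming from the open cones: one must use \emph{compactly supported} Euler characteristic on the strata rather than the ordinary one. The open cone $\cone \gG_i - \gG_i$ deformation-retracts to the cone point, so its ordinary Euler characteristic is $1$, which would give the wrong signs; the correct value $\chi_c = -q_i$ is precisely what makes additivity over the CW stratification valid and what converts the per-stratum product into the polynomial evaluation at $-\bq/(\bq+\bone)$.
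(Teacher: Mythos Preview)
Your proof is correct. The paper's own proof simply cites \cite[Cor.~2]{dpe} for (1) and then deduces (2) from the index of $\cg_0$ in $\cg$, exactly as you do; so for (2) the arguments are identical, while for (1) you supply a direct stratification argument in place of the citation. Your computation---stratifying $\zxg(\cone\ug,\ug)$ by the value of $\gs(x)$, using additivity and multiplicativity of $\chi_c$, and noting $\chi_c(\cone\gG_i-\gG_i)=1-(q_i+1)=-q_i$---is precisely the kind of argument that underlies the cited result, and your care about using compactly supported Euler characteristic on the open strata (rather than the ordinary one) is exactly the point that makes the signs come out right. So the two approaches are the same in spirit; yours is simply self-contained rather than appealing to an external reference.
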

\begin{proof}
The formula in (1) is proved in \cite[Cor.~2]{dpe}.  The group $\cg_0$ has index $(\bq+\bone)_\n$ in $\cg_0$; so, (2) is immediate from the definition of the rational Euler characteristic.
\end{proof}

Recall that if a group is nontrivial and type FL, then it is necessarily infinite.
\begin{proposition}\label{p:euler1}
Suppose each $\gG_i$ is type FL (so that its Euler characteristic is defined). Let $e_i=e(\gG_i):=\chi(\gG_i)-1$ be the reduced Euler characteristic of $B\gG_i$, and put $\be=(e_i)_{i\in \nn}$.  Then
	\(
	\chi(\cg)=f_X(\be).
	\)
\end{proposition}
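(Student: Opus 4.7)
The plan is to reduce the computation of $\chi(\cg)$ to the computation of $\chi(B\cg)$ via a finite model, and then apply the polyhedral-product decomposition \eqref{e:pp0} together with the multiplicativity of the reduced Euler characteristic under smash products. Since each $\gG_i$ is type FL, each $B\gG_i$ has a finite CW model (or at least a finite resolution of $\zz$ sufficient for defining $\chi(\gG_i)$), so $e_i=\chi(\gG_i)-1$ is well-defined. A graph product of type FL groups is itself type FL, so $\chi(\cg)$ equals the usual Euler characteristic of $B\cg$.

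First I would invoke Proposition~\ref{p:pp}\,(1) to identify $B\cg$ with the polyhedral product $\zxg(B\ug,\ul{*})$. Since each $B_i=*_i$ is contractible, formula \eqref{e:pp0} gives a direct sum decomposition
\[
H_*(B\cg)\;\cong\;\bigoplus_{\gs\in X(G)} \wt{H}_*(\wh{B\ug}^{\gs}),
\]
where $\wh{B\ug}^{\gs}$ denotes the $\gs$-fold smash product of the pointed spaces $B\gG_i$, with the convention that the empty smash product is $S^0$ (so that the $\gs=\emptyset$ summand contributes a single $\zz$ in degree $0$, accounting for the basepoint of $B\cg$).

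Next I take (reduced) Euler characteristics of both sides. The key identity is that $\wt{\chi}$ is multiplicative under smash products:  for pointed finite CW complexes $A,B$, one has $\wt{\chi}(A\wedge B)=\wt{\chi}(A)\,\wt{\chi}(B)$, which follows from $\chi(A\wedge B)=\chi(A\times B)-\chi(A\vee B)+1$. Iterating,
\[
\wt{\chi}(\wh{B\ug}^{\gs})\;=\;\prod_{i\in \gs} \wt{\chi}(B\gG_i)\;=\;\prod_{i\in\gs} e_i\;=\;\be_\gs,
\]
with the convention that the empty product equals $1$, consistent with $\wt{\chi}(S^0)=1$.

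Finally, summing over all simplices of $X=X(G)$ and using the definition of $f_X$ from \S\ref{ss:fh},
\[
\chi(\cg)\;=\;\chi(B\cg)\;=\;\sum_{\gs\in X}\wt{\chi}(\wh{B\ug}^{\gs})\;=\;\sum_{\gs\in X}\be_\gs\;=\;f_X(\be),
\]
which is the desired formula. The main (minor) obstacle is purely bookkeeping: keeping the empty simplex aligned on both sides, i.e., matching the basepoint summand in \eqref{e:pp0} with the constant term $\be_\emptyset=1$ of $f_X$. No deeper difficulty arises, since type FL is precisely the hypothesis that lets us treat $\chi$ additively on direct sums and multiplicatively on smash products.
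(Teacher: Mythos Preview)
Your argument is correct and follows the same overall route as the paper: identify $B\cg$ with the polyhedral product $\zxg(B\ug,\ul{*})$ via Proposition~\ref{p:pp}\,(1), then read off its Euler characteristic. The only difference is one of packaging: the paper simply cites \cite[Cor.~1]{dpe} for the formula $\chi(\zxg(B\ug,\ul{*}))=f_X(\be)$, whereas you derive that formula yourself from the smash-product homology decomposition and the multiplicativity $\wt{\chi}(A\wedge B)=\wt{\chi}(A)\,\wt{\chi}(B)$. Your derivation is exactly the content of the cited corollary, so the two proofs are substantively identical; yours is just more self-contained.
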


\begin{proof}
By Proposition~\ref{p:pp}\,(1), $B\cg = \zxg (B\ug,\ul{*})$.  In \cite[Cor.\,1]{dpe} there is a formula for the Euler characteristic of the polyhedral product, which gives $\chi(B\cg)=f_X(\be)$.
\end{proof}

\subsection{Cohomology with group ring coefficients}\label{ss:gpring}
An important invariant of an infinite discrete  group $H$ is its cohomology with coefficients in its group ring, $\zz H$.  For example, the number of ends of $H$, denoted by $\Ends H$, is $1$, $2$ or $\infty$ as the rank of $H^1(H;\zz H)$ is $0$, $1$ or $\infty$.  If $H$ is type $\mathrm{FP}_R$, then its cohomological dimension, $\cd_R(H)$, with respect to a commutative ring $R$ is given by,
	\begin{equation*}\label{e:Fcd}
	\cd_R H=\max\{k\mid H^k(H;R H)\neq 0\}.
	\end{equation*}
As usual, when $R=\zz$, the subscript is omitted  and we write $\cd H$ instead of $\cd_R H$.

\paragraph{The case where each $\gG_i$ is finite.}	
In what follows $X=X(G)$ and for any $\gs\in  X$, $X-\gs$ means the full subcomplex of $X$ spanned by $\n-\gs$.

\begin{proposition}\label{p:gpringfin}\textup{(\cite{ddjo2} or \cite[Cor.~9.4]{ddjmo})}.
Suppose each $\gG_i$ is finite. Then, for $\cg=\cg(\ug),G)$, 
		\[
			H^*(\cg;\zz \cg)=\bigoplus_{\gs\in  X} H^*(\cone X,X-\gs)\otimes \hat{A}^{\gs},
		\]
where $\hat{A}^{\gs}$ is a certain (free abelian) subgroup of $\zz (\cg/\ug^{\gs})$ (where $\ug^{\gs}$ denotes  the $\gs$-fold product of the $\gG_i$). 
\end{proposition}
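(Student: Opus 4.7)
The plan is to identify $H^{*}(\cg;\zz\cg)$ with the compactly supported cohomology of a contractible complex on which $\cg$ acts, and then to read off that cohomology from the polyhedral-product decomposition of Proposition~\ref{p:pp}(2).

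By Proposition~\ref{p:pp}(2), $\cg$ contains a finite-index torsion-free subgroup $\cg_{0}$ whose classifying space is the finite polyhedral product $\cz=\zxg(\cone\ug,\ug)$. Its universal cover $\wt{\cz}=\tzxg(\cone\ug,\ug)$ is the standard realization of a right-angled building, hence is contractible, and the cell stabilizers for the $\cg$-action on $\wt{\cz}$ are all finite (the stabilizer of a cell in the $\gs$-stratum is conjugate to the subproduct $\ug^{\gs}$). The cellular cochain complex of $\wt{\cz}$ is therefore a finite free resolution of $\zz$ over $\zz\cg_{0}$, and an application of Shapiro's lemma to repackage $\zz\cg_{0}$-coefficients as $\zz\cg$-coefficients yields the standard identification
\[
H^{*}(\cg;\zz\cg)\ \cong\ H^{*}_{c}(\wt{\cz};\zz).
\]

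To evaluate the right-hand side, I would exhaust $\wt{\cz}$ by finite unions of $\cg$-translates of the fundamental chamber and take the direct limit of ordinary cohomology of pairs. The key input is formula (\ref{e:pp2}): every such finite sub-union is itself a polyhedral product of the form $\zx(\cone\ug',\ug')$ in which each $\cone\gG_{i}$ is contractible, so its relative cohomology splits as a sum indexed by the simplices $\gs\in X$. In the direct limit, the summand indexed by $\gs$ becomes $H^{*}(\cone X, X-\gs)$---the cohomology of the chamber relative to the part of its boundary complementary to the $\gs$-stratum---tensored with a free abelian group $\hat{A}^{\gs}$ that supplies one generator for each $\cg$-translate of a $\gs$-sector, i.e.\ for each coset of the stabilizer $\ug^{\gs}$ in $\cg$. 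This is the subgroup of $\zz(\cg/\ug^{\gs})$ asserted in the proposition.

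The main obstacle is verifying that the Mayer--Vietoris spectral sequence for the exhaustion of $\wt{\cz}$ actually degenerates, so that the answer is a genuine direct sum over $\gs\in X$ rather than a nontrivial extension. This is where the flag condition on $X$ and the building structure of $\wt{\cz}$ enter essentially: distinct $\gs$-sectors meet only along strictly lower-dimensional strata, and the building is CAT$(0)$ (from \cite{d11}), so the sector filtration splits after passing to the direct limit. A secondary, more bookkeeping-level obstacle is pinning down $\hat{A}^{\gs}$ as the \emph{specific} free abelian subgroup of $\zz(\cg/\ug^{\gs})$; this is handled by tracking the mirror structure on $\cone\ug$ (the $i$-th mirror being the subspace of $\cone\gG_i$ at the base $\gG_i$) through the direct limit, exactly as in the arguments of~\cite{ddjo2, ddjmo}.
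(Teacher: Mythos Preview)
The paper does not supply its own proof of this proposition: it is stated with a citation to \cite{ddjo2} and \cite[Cor.~9.4]{ddjmo} and is used as a black box thereafter. So there is no in-paper argument to compare against.

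Your sketch is broadly faithful to the approach of those references: identify $H^{*}(\cg;\zz\cg)$ with the compactly supported cohomology of the contractible $\cg$-complex $\tzxg(\cone\ug,\ug)$, then exhaust by finite unions of chambers and read off the answer from the mirror/sector structure. Two comments. First, the identification $H^{*}(\cg;\zz\cg)\cong H^{*}_{c}(\wt{\cz})$ does not require passing through $\cg_{0}$ and Shapiro's lemma; it follows directly because $\cg$ acts properly and cocompactly on the contractible finite-dimensional CW complex $\wt{\cz}$. Second, the step you flag as the ``main obstacle'' is indeed where all the content lies, and your proposal does not actually discharge it: saying that sectors meet only in lower strata and that the building is $\cat(0)$ is suggestive but not a proof of the splitting. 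In the cited papers the splitting is obtained not by a Mayer--Vietoris spectral-sequence degeneration argument but by constructing an explicit coretraction on cochains (a combinatorial averaging over cosets in $\cg/\ug^{\gs}$), which is what produces the specific free abelian subgroups $\hat{A}^{\gs}\subset \zz(\cg/\ug^{\gs})$. Since you ultimately defer this step to \cite{ddjo2,ddjmo} anyway, your write-up is really a roadmap to those references rather than an independent proof---which, given that the paper itself treats the proposition as a citation, is perfectly appropriate.
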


\begin{corollary}\label{c:endsvcd}
Suppose each $\gG_i$ is finite.
\begin{enumerate1}
\item If $X$ is a simplex, then $\cg$ is finite and $\Ends(\cg)=0$.  If $X$ is the suspension of a simplex and the groups for both suspension vertices are $\cong \zz/2$, then $\Ends (\cg)=2$.  Otherwise,
\[
\Ends (\cg)=
\begin{cases}
1, 	& \text{if $\rh^0(X-\gs)=0$ for all $\gs\in  X$;}\\
\infty,	& \text{if $\rh^0(X-\gs)\neq 0$ for some $\gs\in  X$.}
\end{cases}
\]
\item
\[
\vcd \cg = \max\{k\mid H^{k-1}(X-\gs)\neq 0 \text{ for some }\gs\in  X\}.
\]
\end{enumerate1}
\end{corollary}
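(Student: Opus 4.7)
The plan is to deduce both parts from Proposition~\ref{p:gpringfin} after simplifying the relative cohomology. Since $\cone X$ is contractible, the long exact sequence of the pair $(\cone X, X-\gs)$ collapses to
\[
H^k(\cone X, X-\gs) \cong \rh^{k-1}(X-\gs)
\]
for all $k \ge 0$, with the convention $\rh^{-1}(\emptyset) = \zz$ and $\rh^{-1}(Y)=0$ for $Y \ne \emptyset$. Substituting into Proposition~\ref{p:gpringfin} gives
\[
H^k(\cg;\zz\cg) = \bigoplus_{\gs\in X} \rh^{k-1}(X-\gs)\otimes \hat{A}^\gs. \qquad (\ast)
\]
Each $\hat{A}^\gs$ is a nonzero free abelian group, so a summand contributes iff $\rh^{k-1}(X-\gs)\neq 0$.

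For part~(2), Proposition~\ref{p:pp}(2) exhibits a torsion-free finite-index subgroup $\cg_0 \le \cg$ with finite classifying space, so $\cg_0$ is of type FL and $\cd\cg_0 = \max\{k : H^k(\cg_0;\zz\cg_0)\neq 0\}$. Shapiro's lemma applied to the finite-index inclusion $\cg_0 \hookrightarrow \cg$ yields $H^k(\cg_0;\zz\cg_0) \cong H^k(\cg;\zz\cg)$, so $(\ast)$ gives
\[
\vcd\cg = \cd\cg_0 = \max\{k : \rh^{k-1}(X-\gs)\neq 0 \text{ for some } \gs\in X\},
\]
which is the stated formula.

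For part~(1), the number of ends of an infinite group $H$ is determined by the $\zz$-rank of $H^1(H;\zz H)$: that rank is $0$, $1$, or infinite according as $\Ends H$ equals $1$, $2$, or $\infty$. If $X$ is the full $(n-1)$-simplex, then $\cg = \prod_i \gG_i$ is finite, accounting for the first special case. Otherwise $\cg$ is infinite, and $(\ast)$ specializes to
\[
H^1(\cg;\zz\cg) = \bigoplus_{\gs\in X} \rh^0(X-\gs)\otimes \hat{A}^\gs,
\]
so $H^1=0$ (hence $\Ends\cg = 1$) precisely when every $X-\gs$ is connected or empty---this is the first branch of the dichotomy.

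The main obstacle is distinguishing $\Ends \cg = 2$ from $\Ends \cg = \infty$ in the remaining case. Rather than computing the ranks of the $\hat{A}^\gs$ head-on, I would invoke Stallings' theorem: a finitely generated group has two ends iff it is virtually infinite cyclic. A short combinatorial analysis of graph products of finite groups then shows that $\cg$ is virtually $\zz$ iff $G$ is the join of a clique $\gD$ and a non-edge $\{v,w\}$ with $\gG_v \cong \gG_w \cong \zz/2$---equivalently, $X$ is the suspension of a simplex with both suspension-vertex groups of order $2$. In that case $\cg \cong (\zz/2 * \zz/2)\times \prod_{i\in\gD}\gG_i = \bd \times (\text{finite})$ and has exactly $2$ ends. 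In every other case with some $\rh^0(X-\gs)\neq 0$, the rank of $H^1(\cg;\zz\cg)$ is forced to be infinite, yielding $\Ends\cg = \infty$.
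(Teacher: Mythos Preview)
Your proposal is correct and follows exactly the approach the paper intends: the paper offers no separate proof of this corollary, treating both parts as immediate consequences of Proposition~\ref{p:gpringfin} together with the standard identification $H^k(\cone X,X-\gs)\cong \rh^{k-1}(X-\gs)$. Your invocation of Shapiro's lemma for the $\vcd$ computation and of Stallings' theorem to isolate the two-ended case are the natural ways to fill in the details the paper leaves implicit; the ``short combinatorial analysis'' you allude to (that a graph product of finite groups is virtually $\zz$ only in the suspension-of-a-simplex case with both cone groups $\zz/2$) is indeed routine, though it does require checking that any other configuration of non-edges or any vertex group of order $\ge 3$ forces a nonabelian free subgroup.
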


For example, when $W_G$ is the right-angled Coxeter group associated to the graph $G$, Proposition~\ref{p:gpringfin} becomes the following formula of \cite{d98},
	\begin{equation*}\label{e:racg}
	H^*(W_G;\zz W_G)=\bigoplus_{\gs\in  X} H^*(\cone X,X-\gs)\otimes \zz W^{\gs},
	\end{equation*}
where $W^{\gs}$ denotes the set of elements in $W$ which can end (exactly) with  letters of $\gs$ and where $\zz W^{\gs}$ denotes the free abelian group on $W^{\gs}$.

\paragraph{The case where each $\gG_i$ is infinite.}  In what follows $\grh^*(\ ;\ )$ means the associated graded group arising from a certain filtration.

\begin{proposition}\label{p:gpringinf}\textup{(\cite[Thm.~4.5]{do10})}. 
Suppose each $\gG_i$ is infinite.  Then for $\cg=\cg(\ug,G)$, we have
		\[
			\grh^m(\cg;\zz \cg)=\bigoplus_{\substack{\gs\in  X\\
			i+j=m}} H^{i}(\cone \Lk(\gs),\Lk(\gs); H^j(\ug^{\gs};\zz \cg)).
		\]
\end{proposition}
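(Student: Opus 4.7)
The plan is to specialize formula \eqref{e:gr1} to the polyhedral product that realizes $B\cg$. By Proposition~\ref{p:pp}\,(1) we have $B\cg = \zxg(B\ug,\ul{*})$, and since this is a $K(\cg,1)$,
\[
H^*(\cg;\zz\cg) = H^*(B\cg;\zz\cg) = H^*\bigl(\zxg(B\ug,\ul{*});\zz\cg\bigr).
\]
Each base point $*_i$ is trivially simply connected and each $\gG_i$ is assumed infinite, so the hypotheses behind \eqref{e:gr1} (the spectral sequence argument of \cite{do10}) are in force for the pair system $\uab = (B\ug,\ul{*})$.

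Applying \eqref{e:gr1} yields
\[
\grh^m\bigl(\zxg(B\ug,\ul{*});\zz\cg\bigr)
=\bigoplus_{\substack{\gs\in X\\ i+j=m}} H^{i}\bigl(\cone\Lk(\gs),\Lk(\gs);\,H^{j}(\ua^{\gs};\zz\cg)\bigr),
\]
where $\ua^{\gs}=\prod_{i\in\gs}A_i=\prod_{i\in\gs}B\gG_i$ is a classifying space for $\ug^{\gs}:=\prod_{i\in\gs}\gG_i$. Restricting the left $\cg$-module $\zz\cg$ along the inclusion $\ug^{\gs}\hookrightarrow\cg$, we have
\[
H^{j}(\ua^{\gs};\zz\cg)=H^{j}\bigl(B\ug^{\gs};\zz\cg\bigr)=H^{j}(\ug^{\gs};\zz\cg),
\]
which is exactly the coefficient system appearing in the statement of Proposition~\ref{p:gpringinf}. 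Substituting this identification gives the claimed decomposition.

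The main substantive input is \eqref{e:gr1} itself, which arises as the $E_\infty$ page of a spectral sequence associated to a skeletal filtration of $\zxg(B\ug,\ul{*})$ (indexed by simplices of $X$, with the link terms capturing the relative cohomology of each stratum and its boundary). That filtration, and the verification that the spectral sequence degenerates into the stated direct sum at the level of the associated graded, is carried out in \cite{do10}; the only point to check here is that the coefficient identification $H^{j}(\ua^{\gs};\zz\cg)=H^{j}(\ug^{\gs};\zz\cg)$ respects the $\cg$-module structure produced by the spectral sequence, which is straightforward since the covering map of $\ua^{\gs}$ by its $\ug^{\gs}$-universal cover, induced from $E\cg\to B\cg$, is exactly the one used in \cite{do10}. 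The anticipated obstacle is purely bookkeeping---tracking the $\cg$-equivariance through the filtration---rather than any new geometric content beyond Proposition~\ref{p:pp}\,(1) and \eqref{e:gr1}.
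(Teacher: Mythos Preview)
The paper does not give its own proof of this proposition; it is simply quoted from \cite[Thm.~4.5]{do10}. Your derivation is correct and is exactly the intended reading: Proposition~\ref{p:gpringinf} is the specialization of \eqref{e:gr1} to the pair system $(B\ug,\ul{*})$, using Proposition~\ref{p:pp}\,(1) to identify $\zxg(B\ug,\ul{*})$ with $B\cg$ and then identifying $H^j(\ua^{\gs};\zz\cg)=H^j(B\ug^{\gs};\zz\cg)=H^j(\ug^{\gs};\zz\cg)$.
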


For example, if  $A_G$ is the right-angled Artin group associated to $G$, we have the following formula of \cite{jm} and \cite{do10}

		\begin{equation}\label{e:raag}
		\grh^n(A_G;\zz A_G)=\bigoplus _{\gs\in  X} H^{n-\dim\gs -1}(\cone \Lk(\gs),\Lk(\gs)) \otimes H^{\dim\gs+1}(\zz^{\gs};\zz A_G),
		\end{equation}
where $\zz^{\gs}$ denotes the free abelian group on $\gs$ and $\dim\gs+1$ is the number of elements in $\gs$ (so that $H^{\dim\gs+1}(\zz^{\gs};\zz A_G)=\zz(A_G/\zz^{\gs}$)).

\subsection{\texorpdfstring{$L^2$}{L2}-Betti numbers}\label{ss:L2betti}
Let $W_G$ be the right-angled Coxeter group associated to a graph $G$.  Its growth series, $W_G (\bt)$, is the rational function in $\bt =(t_i)_{i\in \n}$ given by
	\begin{equation*}\label{e:growth}
	\frac{1}{W_G(\bt)}=f_{X(G)}\left(\frac{-\bt}{\bone+\bt}\right) =\frac{\hat{h}_{X(G)}(-\bt)}{(\bone +\bt)_{[n]}},
	\end{equation*}
(See \cite[\S17.1]{dbook}.) 

Let $\car_G$ denote the region of convergence of $W_G(\bt)$.  For example, if $G=V[n]$, the graph with vertex set $[n]$ and no edges, we have
	\begin{equation*}\label{e:noedge}
	\frac{1}{W_{V[n]}(\bt)}=1-\sum_{i=1}^{n} \frac{t_i}{1+t_i}.
	\end{equation*}
It follows that
	\begin{equation}\label{e:noedge2}
	\car_{V[n]}\cap [0,\infty)^n=\{\bt\in [0,\infty)^n\mid \sum_{i=1}^n \frac{t_i}{1+t_i} < 1\}.
	\end{equation}
(Indeed,  for $\bt$ in the indicated range $1/W_{V[n]}(\bt)$ is always positive; hence, $W_{V[n]}(\bt)$ converges.)

For another example, when $\bt$ is the constant indeterminate $t$, we have 
	\[
	\frac{1}{W_G(t)}=\frac{h_{X(G)}(-t)}{(1+t)^{d+1}},
	\]
so that $\car_G$ consists of all complex numbers of modulus less than the smallest positive real root $\gr$ of $h_{X(G)}(-t)$. (Note $\gr\in (0,1]$.)

In \S\ref{ss:Gfinite} we will need the following lemma.

\begin{lemma}\label{l:car}
Suppose $G$, $G'$ are two graphs with the same vertex set $[n]$ such that $G'$ is obtained by deleting edges of $G$.  Then $\car_{G'}\le \car_G$.  In particular, for any graph $G$, $\car_{G}$ always contains the region defined by \eqref{e:noedge2}.
\end{lemma}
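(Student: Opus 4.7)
The plan is to compare the two growth series coefficient-wise. I would write $W_G(\bt) = \sum_{\mathbf{m}\in\nn^n} w_{G,\mathbf{m}}\,\bt^{\mathbf{m}}$, where $w_{G,\mathbf{m}}$ counts the elements of $W_G$ whose reduced expressions use the generator $s_i$ exactly $m_i$ times. Both $W_G(\bt)$ and $W_{G'}(\bt)$ then have nonnegative integer coefficients, so their domains of convergence are logarithmically convex Reinhardt domains, and the containment $\car_{G'}\le\car_G$ reduces to the pointwise inequality $w_{G,\mathbf{m}}\le w_{G',\mathbf{m}}$ for every $\mathbf{m}$, since absolute convergence at a complex point $\bz$ is equivalent to convergence at $(|z_i|)_{i\in\n}$ for such series.

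The main input will be the standard normal form for right-angled Coxeter groups: every element of $W_G$ is an equivalence class of reduced words under the commutation relation $\sim_G$ generated by the moves $s_is_j\leftrightarrow s_js_i$ for $\{i,j\}\in\edge(G)$, and a word is reduced in $W_G$ precisely when no sequence of $\sim_G$-moves brings two copies of the same generator into adjacent positions (where they would cancel via $s_i^2=1$). Letting $R_G(\mathbf{m})$ denote the set of words of multi-degree $\mathbf{m}$ that are reduced in $W_G$, this gives $w_{G,\mathbf{m}}=|R_G(\mathbf{m})/\!\sim_G|$.

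I would then use the hypothesis $\edge(G')\subseteq\edge(G)$ in two distinct ways. First, every commutation move available in $W_{G'}$ is also available in $W_G$, so any reduction sequence shortening a word in $W_{G'}$ does so already in $W_G$; contrapositively, being reduced in $W_G$ implies being reduced in $W_{G'}$, so $R_G(\mathbf{m})\subseteq R_{G'}(\mathbf{m})$. Second, $\sim_{G'}$ is a subrelation of $\sim_G$, so each $\sim_G$-class on $R_G(\mathbf{m})$ is a union of $\sim_{G'}$-classes. Concatenating,
\[
w_{G',\mathbf{m}}\;=\;|R_{G'}(\mathbf{m})/\!\sim_{G'}|\;\ge\;|R_G(\mathbf{m})/\!\sim_{G'}|\;\ge\;|R_G(\mathbf{m})/\!\sim_G|\;=\;w_{G,\mathbf{m}}.
\]
The ``in particular'' assertion follows by taking $G'=V[n]$, whose trace on $[0,\infty)^n$ is identified explicitly in \eqref{e:noedge2}.

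The one conceptually delicate point is noticing that edge deletion has two effects that happen to push growth in the same direction: more words become genuinely reduced (fewer reduction moves are available) and fewer reduced words get identified (fewer commutations are allowed). Tits's normal form lets one keep track of both simultaneously; after that, the passage from coefficient-wise domination to containment of Reinhardt domains is routine.
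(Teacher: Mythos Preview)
Your proof is correct and follows essentially the same approach as the paper's: both argue that the power-series coefficients of $W_{G'}(\bt)$ dominate those of $W_G(\bt)$, and then deduce $\car_{G'}\subseteq\car_G$. The paper does this in one sentence (``since there are more relations in $W_G$ than in $W_{G'}$, the number of elements of word length $k$ with letters in a given subset of $[n]$ is greater for $W_{G'}$ than for $W_G$''), whereas you supply the justification via Tits's normal form---tracking separately that edge deletion enlarges the set of reduced words and coarsens the commutation equivalence. Your version is more careful but not genuinely different in strategy.
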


\begin{proof}
Since there are more relations in $W_G$ than in $W_{G'}$, the number of elements of word length $k$ with letters in a given subset of $[n]$ is greater for $W_{G'}$ than for $W_G$.  Hence, the coefficients in the power series $W_{G'}(\bt)$ are positive integers which dominate the coefficients of $W_G(\bt)$.  So, $\car_{G'}\le \car_G$.  The last sentence of the lemma follows immediately.
\end{proof}

Let $\bone/\bt$ denote the sequence $(1/t_i)_{i\in \nn}$.
For each simplex $\gs\in  X$, define a series
	\begin{equation*}\label{e:dgs1}
	D_\gs(\bt)=\sum_{\gt\in  X_{\ge \gs}} \frac{(-1)^{\dim\gt-\dim\gs}}{(\bone+\bone/\bt)_{I(\gt)}}\,.
	\end{equation*}
Let $G(\gs)$ denote the $1$-skeleton of $\Lk(\gs)$.  Notice that $D_\gs(t)$ is related to the power series for $W_{G(\gs)}$ by the following formula (see \cite[Lemma~17.1.8, Cor.~20.6.17]{dbook}).
	\begin{equation}\label{e:dgs2}
	D_\gs(\bt)=\frac{\bone}{(\bone+\bt)_{\gs}}\cdot \frac{1}{W_{G(\gs)}(\bone/\bt)}.
	\end{equation}

\begin{proposition}\label{p:finL2} \textup{(cf.\ \cite[Thm.~20.8.4]{dbook}).}
Suppose each $\gG_i$ is finite and $\ug$ has order $\bq+\bone$.  Suppose further  that $\bone/\bq$ lies in the region of convergence $\car_G$ for $W_G(\bt)$.  Then 
	\[
	\ltwob_m(\cg) =\sum_{\gs\in  X} b_m(\cone X, X-\gs;\qq)\cdot D_\gs (\bq),
	\]
where $b_m(\cone X,X-\gs;\qq)$ is the ordinary Betti number (with rational coefficients) of the pair. (Since $\cone X$ is contractible,  $b_m(\cone X,X-\gs;\qq)$ is equal to the reduced Betti number $\tilde{b}_{m-1}(X-\gs;\qq)$.) 
\end{proposition}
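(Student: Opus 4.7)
The formula is essentially \cite[Thm.~20.8.4]{dbook}, but reinterpreted in the graph product setting via Proposition~\ref{p:pp}. The plan is to reduce the $L^2$-Betti computation for $\cg$ to the weighted $L^2$-Betti number computation for the right-angled Coxeter group $W_G$ at parameter $\bq$, and then apply the decomposition of $L^2_\bq$-cohomology of the Davis complex.

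First, pass to the torsion-free finite-index subgroup $\cg_0 \le \cg$ supplied by Proposition~\ref{p:pp}\,(2). By multiplicativity of $L^2$-Betti numbers under finite-index subgroups,
\[
\ltwob_m(\cg) = \frac{\ltwob_m(\cg_0)}{(\bq+\bone)_\n}.
\]
Since $B\cg_0=\zx(\cone\ug,\ug)$ is a finite complex, $\ltwob_m(\cg_0)$ equals the von Neumann dimension of $H^m_{(2)}(\tilde{\cz}_X(\cone\ug,\ug);\ell^2\cg_0)$, where the universal cover $\tilde{\cz}_X(\cone\ug,\ug)$ is contractible because it is the standard realization of a right-angled building (cf.~\cite[Prop.~2.10]{d11}).

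Second, I would use the $\ell^2$-analog of the polyhedral product decomposition \eqref{e:pp2}. The cellular cochain complex of $\tilde{\cz}_X(\cone\ug,\ug)$, as a module over the appropriate group von Neumann algebra, decomposes according to simplices (and non-simplices) of $X$. The summand indexed by $\gs$ contributes a term whose ``topological factor'' is the relative cohomology $H^*(\cone X, X-\gs;\qq)$ and whose ``coefficient factor'' is a Hilbert module whose von Neumann dimension I would compute via a Hecke algebra trace.

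Third, I would identify that trace. The graph product of finite groups $\cg(G,\ug)$ admits a Hecke-algebra description relating its $\ell^2$ $\cg$-module to the Hecke algebra $\R$ of $W_G$ at multiparameter $\bq$, acting on $\ltwo$. Under this identification, the relevant trace on the summand indexed by $\gs$ evaluates to $D_\gs(\bq)$; this is precisely the content of the formula \eqref{e:dgs2} together with the growth-series interpretation of the standard trace on $\R$ (\cite[Cor.~20.6.17]{dbook}). The convergence hypothesis $\bone/\bq \in \car_G$ is exactly what is needed to guarantee that the series defining $D_\gs(\bq)$ converges absolutely, so that the von Neumann dimensions are well-defined finite numbers.

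Assembling these contributions and dividing by $(\bq+\bone)_\n$ (which is absorbed into the normalization of $D_\gs(\bq)$ and the identification of $b_m(\cone X, X-\gs;\qq)$) yields the stated formula. The main obstacle is the bookkeeping in the second and third steps: verifying that the $L^2$-Betti numbers of $\cg$ (viewed as the graph product) coincide with the weighted $L^2$-Betti numbers of $W_G$ at parameter $\bq$ in the sense of \cite{dbook}, so that Theorem~20.8.4 applies directly. This identification is where the hypothesis that each $\gG_i$ is finite of order $q_i+1$ is crucial: it allows the Hilbert $\cg$-module structure on $\ell^2\cg$ to be matched with the Hilbert $\R$-module structure on $\ltwo$, and the trace computations to align.
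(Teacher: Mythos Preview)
The paper does not supply its own proof of this proposition: it is stated with a bare citation to \cite[Thm.~20.8.4]{dbook} and no argument follows. So there is nothing in the paper to compare against directly.

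That said, your plan is the right one and aligns with the approach in the cited reference. The key identification is that the universal cover $\tzxg(\cone\ug,\ug)$ is the standard realization of a right-angled building of type $(W_G,[n])$ with thickness vector $\bq+\bone$, so that the $L^2$-Betti numbers of $\cg$ coincide with the weighted $L^2$-Betti numbers $\ltwob_m(W_G)$ of the Davis complex at the multiparameter $\bq$. Once that is in hand, \cite[Thm.~20.8.4]{dbook} gives exactly the decomposition you describe, with the coefficient $D_\gs(\bq)$ arising as the von Neumann dimension of the Hecke-algebra summand attached to $\gs$ (this is where the convergence hypothesis $\bone/\bq\in\car_G$ enters, to ensure the relevant idempotents in the Hecke--von Neumann algebra exist). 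Your second step, phrased as an ``$\ell^2$-analog of \eqref{e:pp2},'' is slightly misleading: the decomposition in \cite{dbook} is not indexed by non-simplex subsets $I$ as in \eqref{e:pp2}, but by simplices $\gs\in X$, via the stratification of the Davis complex by ``coface types'' and the corresponding Solomon-type idempotents in the Hecke algebra. But the end result and the role of $D_\gs(\bq)$ are as you say.
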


As one might suspect from the results in the previous subsection, the calculation is different when all $\gG_i$ are  infinite.  So, suppose each $\gG_i$ is infinite and that their $L^2$-Betti numbers are defined.  Given $\gs\in  X$, let $\ltwob_{\gs,m}$ denote the $m^{th}$ $L^2$-Betti number of the $\gs$-fold product, $\gG^{\gs}$.  If $\gs=\{i_1,\dots, i_k\}$, then, by the K\"{u}nneth Formula,
	\begin{equation}\label{e:bgs}
	\ltwob_{\gs,m}=\sum_{f(i_1)+\cdots+f( i_{k})=m} \ltwob_{f(i_1)}(\gG_{i_1})\cdots \ltwob_{f(i_{k})}(\gG_{i_k}).
	\end{equation}
where $f$ ranges over all functions from $\gs$ to $\nn$ which sum to $m$.

\begin{proposition}\label{p:infL2} \textup{(\cite[Thm.\,4.6]{do10}).}
Suppose each $\gG_i$ is infinite.  Then
		\[
			\ltwob_l(\cg)=\sum_{\substack{\gs\in  X\\
			i+m=l}} b_i(\cone \Lk(\gs),\Lk(\gs))\cdot \ltwob_{\gs,m},
		\]
where $ \ltwob_{\gs,m}$ is given by \eqref{e:bgs}.
\end{proposition}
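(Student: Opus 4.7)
The plan is to derive an $\eltwo$-coefficient analog of the associated graded decomposition \eqref{e:gr1} and then extract von Neumann dimensions summand by summand. By Proposition~\ref{p:pp}\,(1) we have $B\cg = \zxg(B\ug, \ul{*})$, so the $L^2$-Betti numbers of $\cg$ are the $\cN(\cg)$-dimensions of the reduced $L^2$-cohomology of the universal cover of this polyhedral product. First I would establish the $\eltwo\cg$-coefficient analog of \eqref{e:gr1}, namely
$$\grh^l(\zxg(B\ug, \ul{*});\, \eltwo\cg) \;=\; \bigoplus_{\substack{\gs \in X\\ i+j=l}} H^i(\cone\Lk(\gs),\, \Lk(\gs);\; H^j(\ua^\gs;\, \eltwo\cg)),$$
where $\ua^\gs = \prod_{i\in\gs} B\gG_i$. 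The same stratification used to produce \eqref{e:gr1}, indexed by simplices $\gs \in X$, should apply: the filtration consists of $\cg$-equivariant subcomplexes, the local structure of $\zxg(B\ug,\ul*)$ near a stratum is a product, and the coefficient system $\eltwo\cg$ is compatible with this product decomposition.

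Next, since $(\cone\Lk(\gs), \Lk(\gs))$ is a finite CW pair on which $\cg$ acts trivially, the universal coefficient theorem splits off the coefficients as a tensor product:
$$H^i(\cone\Lk(\gs),\, \Lk(\gs);\; H^j(\ua^\gs;\, \eltwo\cg)) \;\cong\; H^i(\cone\Lk(\gs),\, \Lk(\gs)) \otimes H^j(\ua^\gs;\, \eltwo\cg).$$
Taking $\cN(\cg)$-dimensions and invoking additivity of $\dim_{\cN(\cg)}$ on short exact sequences, the total dimension of the associated graded equals $\ltwob_l(\cg)$. For each $\gs$, since $\ua^\gs$ is a classifying space for $\ug^\gs$ and $\eltwo\cg$ is a free Hilbert $\cN(\ug^\gs)$-module (via any coset decomposition $\cg = \bigsqcup g\,\ug^\gs$), the induction identity for L\"uck dimensions gives
$$\dim_{\cN(\cg)} H^j(\ua^\gs;\, \eltwo\cg) \;=\; \dim_{\cN(\ug^\gs)} H^j(\ua^\gs;\, \eltwo(\ug^\gs)) \;=\; \ltwob_j(\ug^\gs).$$
The $L^2$-K\"unneth formula applied to the direct product $\ug^\gs = \prod_{i\in\gs}\gG_i$ then identifies $\ltwob_j(\ug^\gs)$ with $\ltwob_{\gs,j}$ as defined in \eqref{e:bgs}. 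Summing over $\gs \in X$ and over $i+j=l$ yields the stated formula.

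The main obstacle I expect is the first step: verifying that the filtration producing \eqref{e:gr1} behaves sufficiently well in the $L^2$-setting for $\cN(\cg)$-dimensions to pass cleanly to the associated graded. The key subtleties include checking that the coefficient modules $H^j(\ua^\gs;\eltwo\cg)$ admit well-defined L\"uck dimensions (so that the usual pathologies of unreduced vs.\ reduced $L^2$-cohomology do not obstruct additivity), and ensuring no hidden spectral-sequence differentials leak dimension across filtration degrees; these are precisely the points where continuous $L^2$-arguments diverge from their purely algebraic $\zz\cg$-counterparts. The remaining ingredients (the tensor splitting, induction of Hilbert modules along $\ug^\gs \hookrightarrow \cg$, and the $L^2$-K\"unneth formula) are standard and closely mirror their analogs in the $\zz\cg$-setting used to establish Proposition~\ref{p:gpringinf}.
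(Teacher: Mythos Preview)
The paper does not prove this proposition at all: it is stated with the attribution \textup{(\cite[Thm.\,4.6]{do10})} and no argument is given. So there is no proof in the paper against which to compare your proposal; the result is imported from the cited reference.

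That said, your outline is a faithful sketch of how the argument in \cite{do10} goes: one uses the polyhedral product model $B\cg=\zxg(B\ug,\ul*)$ from Proposition~\ref{p:pp}, applies the same simplex-indexed filtration that yields \eqref{e:gr1}, and then reads off von Neumann dimensions termwise using induction along $\ug^\gs\hookrightarrow\cg$ and the $L^2$-K\"unneth formula. Your caveats about additivity of $\dim_{\cN(\cg)}$ across the associated graded and about reduced versus unreduced $L^2$-cohomology are exactly the technical points that need care; in \cite{do10} these are handled by working throughout with Hilbert $\cN(\cg)$-modules and L\"uck's dimension theory, where additivity on weakly exact sequences suffices. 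One small correction: your universal-coefficient step as written would introduce an $\Ext$ term over $\zz$; the cleanest route is to take rational (or real) coefficients on the finite pair $(\cone\Lk(\gs),\Lk(\gs))$ from the outset, which is harmless since $L^2$-Betti numbers are insensitive to the change $\zz\to\qq$.
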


Since all $L^2$-Betti numbers of the infinite cyclic group vanish, for right-angled Artin groups  the previous proposition drastically simplifies to the following.

\begin{corollary}\label{c:raag}\textup{(Davis-Leary \cite{dl})}.
$\ltwob_l(A_G)=b _l(\cone X(G),X(G);\qq)$.  In other words, the $L^2$-Betti numbers of $A_G$ are the ordinary reduced Betti numbers of $X(G)$ with degree shifted up by $1$.
\end{corollary}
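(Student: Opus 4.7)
The plan is to derive Corollary~\ref{c:raag} as an immediate specialization of Proposition~\ref{p:infL2} to the case $\gG_i = \zz$. The key input is the vanishing of all $L^2$-Betti numbers of the infinite cyclic group: $\ltwob_k(\zz) = 0$ for every $k \ge 0$ (this is standard, e.g.\ because $\zz$ is infinite amenable, or by a direct Fourier computation on $L^2(\zz) = L^2(S^1)$).

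First I would unpack the Künneth formula \eqref{e:bgs} for $\ltwob_{\gs,m}$ when $\ug$ is the constant sequence $\gG_i = \zz$. If $\gs \ne \emptyset$, then every product $\ltwob_{f(i_1)}(\zz)\cdots \ltwob_{f(i_k)}(\zz)$ contains at least one factor of $\ltwob_{f(i_j)}(\zz) = 0$; hence $\ltwob_{\gs,m} = 0$ for all $m$ and all nonempty $\gs \in X(G)$. If $\gs = \emptyset$, then $\gG^{\emptyset}$ is the trivial group, so $\ltwob_{\emptyset,0} = 1$ and $\ltwob_{\emptyset,m} = 0$ for $m > 0$.

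Next I would substitute these values into the formula of Proposition~\ref{p:infL2}. All terms with $\gs \ne \emptyset$ drop out, so the sum collapses to the single contribution of the empty simplex, with $m = 0$ and $i = l$. Since $\Lk(\emptyset, X(G)) = X(G)$, this contribution is
\[
	\ltwob_l(A_G) = b_l(\cone X(G), X(G); \qq) \cdot \ltwob_{\emptyset,0} = b_l(\cone X(G), X(G); \qq),
\]
which is exactly the claimed formula. The statement about reduced Betti numbers follows from the long exact sequence of the pair together with the contractibility of $\cone X(G)$, giving $b_l(\cone X(G), X(G); \qq) = \tilde b_{l-1}(X(G); \qq)$.

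There is no substantive obstacle here; the only point one has to be careful about is bookkeeping for the empty simplex (in particular that $\ltwob_0$ of the trivial group equals $1$, not $0$), and noting that the formula in Proposition~\ref{p:infL2} does indeed include $\gs = \emptyset$ in the indexing set. Everything else is a mechanical consequence of $\ltwob_*(\zz) \equiv 0$.
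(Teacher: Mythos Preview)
Your proposal is correct and follows exactly the paper's approach: the paper derives Corollary~\ref{c:raag} immediately from Proposition~\ref{p:infL2} via the single observation that all $L^2$-Betti numbers of $\zz$ vanish, which is precisely what you do (with the bookkeeping for $\gs=\emptyset$ made explicit).
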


\section{Random flag complexes}\label{s:rflag}

In this section we state some results about the topology of the random flag complex $X=X(n,p)$. 
Earlier results were proved by the second author in \cite{clique, k12}.  Here we show that similar results hold $\aas$ for  $X-\gs$  for all simplices $\gs$ of $X$, and for $\Lk(\gs,X)$ for all simplices $\gs \in X$ of sufficiently small dimension.

\begin{theorem}\label{t:randombetti} \textup{(cf. \cite{k12, clique, hkp}).}
Suppose $X\sim X(n,p)$ where $$ \frac{1}{n^{1/k }} \ll p \ll \frac{1}{ n^{1/(k+1) }},$$ where $k$ is a given integer $\ge 0$. Then w.h.p., for every face $\sigma \in X$ the subcomplex $X - \sigma$ satisfies the following properties: 

\begin{enumerate1}
\item $\dim (X -\sigma) = d$, where $d=2k+1$ (when $\go(n^{-2/(2k+1)})\le p$) or  $d=2k$ (when $p\le o(n^{-2/(2k+1)})$).
\item $\wH_i(X - \sigma;\qq) = 0$ if and only if $i \neq k $.
\end{enumerate1}
\end{theorem}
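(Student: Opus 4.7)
The plan is to bootstrap from the known concentration results for $X(n,p)$ itself, as in \cite{clique, hkp}, via a union bound over the possible $\sigma$. The key reduction I would use is that for any fixed subset $S\subseteq[n]$ with $|S|\le d+1$, the induced subgraph $G(n,p)[[n]\setminus S]$ has the distribution of $G(n-|S|,p)$, and hence the full subcomplex $X-S$ is distributed as the random flag complex $X(n-|S|,p)$ in the same asymptotic regime for $p$. Since any simplex $\sigma\in X$ satisfies $|\sigma|\le d+1$ (because $\dim X\le d$ $\aas$), it suffices to prove both properties uniformly over all subsets $S\subseteq[n]$ with $|S|\le d+1$, of which there are $O(n^{d+1})$. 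So the target is to bound the failure probability for a single $X-S$ by a quantity that beats $n^{-(d+1)}$.

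For part (1), the upper bound $\dim(X-S)\le d$ is automatic from $\dim X\le d$, which itself follows $\aas$ from the first moment argument using $p\ll n^{-2/(d+1)}$. For the lower bound, I would apply Janson's inequality to the count of $d$-simplices in $X(n-|S|,p)$: this count has expectation $n^{\Omega(1)}$ in the given regime, and Janson yields that the probability of having no $d$-simplex decays super-polynomially, roughly like $\exp(-n^{\Omega(1)})$. The union bound over the $O(n^{d+1})$ subsets then survives comfortably.

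For part (2), I would appeal to the spectral (Garland-type) arguments underlying \cite{clique, hkp}, which reduce the vanishing of $\wH_i(X(n',p);\qq)$ outside middle degree to spectral gap estimates on Laplacians of links of simplices. These in turn rest on concentration inequalities for the spectra of $G(n',p)$-type random graphs (matrix Bernstein, random matrix tail bounds, Kahn--Szemer\'edi-type estimates), which naturally give super-polynomial concentration. Thus the failure probability for a single $X-S$ can be pushed below $n^{-C}$ for any fixed $C$, enough to survive the union bound.

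The main obstacle will be making the quantitative tail bounds from \cite{clique, hkp} explicit, since those works state their conclusions only in $\aas$ form; a careful reading of the proofs should produce the polynomial (indeed super-polynomial) concentration needed here. Once those estimates are secured, the distributional identification of $X-S$ with $X(n-|S|,p)$ and the union bound over $S$ are both routine.
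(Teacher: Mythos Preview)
Your strategy for part (1) and for the lower range $i<\lfloor d/2\rfloor$ of part (2) is correct and is essentially what the paper does: Janson gives $\exp(-n^{\Omega(1)})$ decay for the absence of $d$-simplices, and the spectral gap estimate (the paper's Theorem~\ref{thm:ergap}) gives failure probability $o(n^{-\alpha})$ for every fixed $\alpha$ on each link, so both survive the union bound over $O(n^{d+1})$ choices of $\sigma$.

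There is one misconception, however. You describe the vanishing ``outside middle degree'' as being reduced to Garland-type spectral gap estimates on links. That is only true below the middle: Garland's theorem (Theorem~\ref{t:garland}) gives $H^{k-1}=0$ under a spectral hypothesis on links of $(k-2)$-simplices, so it controls the range $i<\lfloor d/2\rfloor$. The vanishing for $i>\lfloor d/2\rfloor$ in \cite{clique} is proved by an entirely different argument, showing that homology is generated by cycles supported on a bounded number of vertices and that each such cycle bounds. So your sketch, as written, leaves the upper range unaccounted for.

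Here the paper's treatment diverges from your plan in a useful way. Rather than extract a quantitative $o(n^{-\alpha})$ tail from the upper-range argument of \cite{clique} and then union-bound over $\sigma$, the paper observes that this argument is structural: once it succeeds for $X$ $\aas$, it automatically succeeds for \emph{every} full subcomplex of $X$ (in particular every $X-\sigma$), with no further probabilistic input. This sidesteps the work you flag as ``the main obstacle'' for half of the problem. Your reduction to $X(n-|S|,p)$ plus union bound would presumably also go through if one chased the constants in \cite{clique}, but the paper's route is cleaner for that range.
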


\begin{remark}
The case $\sigma = \emptyset$ follows from \cite[Cor.~2.2]{k12}.
\end{remark}

\begin{remark}\label{r:integer}
As for homology with integer coefficients, it is proved in \cite{clique} that $\aas$ $\wH_i(X)$ vanishes whenever $i$ lies in either of the following two ranges,
\begin{enumeratea}
\item
$i\le \lfloor (k-1)/2 \rfloor$ or 
\item
$i>k$.
\end{enumeratea}
With regard to (a), it is proved in \cite{clique} that $X$ is $\lfloor (k-1)/2 \rfloor$-connected $\eaas$.  With some work, this can be extended to show that $X-\gs$ is $\lfloor (k-1)/2 \rfloor$-connected for all $\gs\in  X$. With regard to (b), with no additional work,  the argument in \cite{clique} shows that for any full subcomplex $Y$ of $X$, for $i>k$, $H_i(Y)=0$ $\eaas$.  In particular, this holds for $Y=X-\gs$.

We don't know if statement (2) of Theorem~\ref{t:randombetti} holds with integer coefficients when $(k-1)/2 <i\le k$.  In this range  $\wH_i(X-\gs)$ could have torsion (cf.\ the comments in Section 7 of \cite{k12}). In particular, $\wH_{k}(X-\gs)$ might have nontrivial torsion.  If this happens, then, by the Universal Coefficient Theorem, $\wH^{k+1}(X-\gs)$  has nontrivial torsion.
 \end{remark} 

\begin{remark} \label{r:notzero}
For each $i \ge 0$ there is a small interval of $p$ for which both $\wH_i(X)$ and $\wH_{i+1}(X)$ are nonvanishing.  For example, when $i=0$, it is well known that if $ c / n \le p \le o(\log n / n)$ \cite{Bollo}, then w.h.p.\ $G(n,p)$ is disconnected but contains cycles.  For every $i$, the width of this window of overlap is of order $$\Theta (( \log n / n)^{1/i+1}),$$
(where $f=\Theta(g)$ means $f=O(g)$ and $g=O(f)$).  Since this is peripheral to our main argument,  we do not prove it here.
\end{remark}
The main tool needed to prove Theorem \ref{t:randombetti} is Theorem \ref{t:garland} below. In \cite{garland} Garland proved vanishing results for cohomology groups of $k$-dimensional simplicial complexes (possibly with coefficients in a unitary representation of the fundamental group) through degree $k-1$ provided the link of each $(j-2)$-simplex $\gs$, with $j\le k$,  is connected and that its Laplacian in degree $0$ has sufficiently large spectral gap.  

Suppose $X$ is a pure simplicial complex of dimension at least $1$.  Given a vertex $v$, let $m(v)$ denote the degree of $v$ in the $1$-skeleton, $X^1$. The \emph{averaging operator} $A:C^0(X;\rr)\to C^0(X;\rr)$ and the \emph{normalized Laplacian} $\gD:C^0(X;\rr)\to C^0(X;\rr)$ are defined by
	\begin{equation*}
	A(\gf)(v):=\frac{1}{m(v)}\sum\gf(w)\quad \text {and}\quad
	\gD:=1-A,
	\end{equation*}
where the summation is over all vertices $w$ which are adjacent to $v$.  Then $\gD$ is positive semidefinite. The spectrum of $A$ lies in $[-1,1]$;  hence, the spectrum of $\gD$ lies in $[0,2]$.  Let $0=\gl_1\le \gl_2\le\cdots \le \gl_n$ be the eigenvalues of $\gD$.  $X$ is connected if and only if  $0$ occurs with multiplicity $1$.  Assuming this to be the case, the first positive eigenvalue, $\gl_2$, is called the \emph{spectral gap}.

Garland's method is explained and expanded upon in \cite{ballmannswiat}, where one can find the following result.  (See also \cite{borel}.)

\begin{theorem}\label{t:garland} \textup{(Ballmann-\'Swi\c atkowski \cite[Thm.~2.5]{ballmannswiat})} 
Suppose $X$ is a  finite simplicial complex and $k$ is a positive integer $< \dim X$ so that the $k$-skeleton, $X^k$, is pure (i.e., every $\gs  \in X^k$, is contained in at least one $k$-dimensional simplex).  Given $\gs\in X$, let $\gl_1(\gs)\le\gl_2(\gs)\le \cdots$, denote the eigenvalues of the normalized Laplacian on $C^0(\Lk(\gs,X);\rr)$.
Assume that there is an $\geps>0$ so that  $\gl_2(\gs) \ge \frac {k}{k+1} +\geps$. Then $H^{k-1}(X;\rr)=0$.
\end{theorem}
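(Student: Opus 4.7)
The plan is to use Hodge theory to reduce vanishing of $H^{k-1}(X;\rr)$ to showing that the combinatorial Laplacian $\Delta=dd^*+d^*d$ acting on $(k-1)$-cochains has trivial kernel. So I would take a harmonic cochain $\varphi\in C^{k-1}(X;\rr)$ and show that the spectral-gap hypothesis forces $\varphi=0$ by bounding the Rayleigh quotient $\langle\Delta\varphi,\varphi\rangle/\|\varphi\|^2$ strictly below.

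The heart of the argument is Garland's local-to-global identity, which expresses the global quadratic form as a sum of local quadratic forms indexed by the $(k-2)$-simplices. Concretely, for each $\tau\in X^{(k-2)}$, restriction of $\varphi$ to cofaces of $\tau$ defines a $0$-cochain $\varphi_\tau$ on $\Lk(\tau,X)$ (the values being $\varphi$ applied to $\tau\cup\{v\}$ for vertices $v$ of the link). The combinatorial identity I would prove, after carefully weighting cochains by the number of top-dimensional simplices containing each face, is of the shape
\begin{equation*}
\langle d\varphi,d\varphi\rangle \;=\; \sum_{\tau\in X^{(k-2)}}\!\bigl\langle \Delta^{\Lk(\tau)}\varphi_\tau,\varphi_\tau\bigr\rangle, \qquad k\|\varphi\|^2 \;=\; \sum_{\tau\in X^{(k-2)}}\|\varphi_\tau\|^2,
\end{equation*}
where $\Delta^{\Lk(\tau)}$ is the normalized graph Laplacian on the vertex set of $\Lk(\tau,X)$. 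The purity assumption on $X^k$ is used here to guarantee that every $(k-2)$-simplex actually has a nonempty link that is at least one-dimensional, so the local Laplacians and their vertex-weights are well-defined.

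Next I would split each $\varphi_\tau=c_\tau+\varphi_\tau^\perp$ into its component along the constants and its orthogonal complement inside $C^0(\Lk(\tau);\rr)$. Harmonicity of $\varphi$, i.e.\ $d^*\varphi=0$, translates into a global constraint that, after integration, yields $\sum_\tau\|c_\tau\|^2\le \|\varphi\|^2$ (the coboundary of a $(k-2)$-cochain built from the $c_\tau$ vanishes against $\varphi$). Combined with the identity above and the hypothesis $\lambda_2(\tau)\ge\tfrac{k}{k+1}+\varepsilon$, one obtains
\begin{equation*}
\langle d\varphi,d\varphi\rangle \;\ge\; \Bigl(\tfrac{k}{k+1}+\varepsilon\Bigr)\sum_\tau \|\varphi_\tau^\perp\|^2 \;\ge\; \Bigl(\tfrac{k}{k+1}+\varepsilon\Bigr)\bigl(k\|\varphi\|^2-\|\varphi\|^2\bigr),
\end{equation*}
after controlling $\sum\|c_\tau\|^2$; a parallel estimate applies to $\langle d^*\varphi,d^*\varphi\rangle$. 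Summing and rearranging yields $\langle\Delta\varphi,\varphi\rangle\ge\delta\|\varphi\|^2$ for some $\delta=\delta(\varepsilon,k)>0$, forcing $\varphi=0$.

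The main obstacle, as usual with Garland's method, is the bookkeeping in step two: one must choose the correct inner products on cochains (weighting each $j$-simplex by the number of top-dimensional simplices containing it, or equivalently by its link-volume) so that the telescoping identity holds with the right constants $\tfrac{k}{k+1}$ appearing. A closely related subtlety is ensuring that the projection onto the constants in each link is absorbed into an error term bounded by $\|\varphi\|^2$ rather than by $\langle d^*\varphi,d^*\varphi\rangle$; this is where harmonicity (as opposed to mere closedness) of $\varphi$ is essential. Once these normalization issues are handled, the spectral-gap hypothesis plugs in directly and the vanishing of $H^{k-1}(X;\rr)$ follows.
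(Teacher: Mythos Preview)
The paper does not give its own proof of this theorem; it is quoted verbatim as a result of Ballmann--\'Swi\c{a}tkowski and used as a black box in the proof of Theorem~\ref{t:randombetti}. So there is no in-paper argument to compare your proposal against.

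That said, your outline is the standard Garland/Ballmann--\'Swi\c{a}tkowski scheme and is broadly on target: localize a harmonic $(k-1)$-cochain to $0$-cochains on links of $(k-2)$-faces, invoke the link spectral gap on the non-constant part, and force $\varphi=0$. Two places where your sketch is looser than a full proof would allow: (i) the claimed bound $\sum_\tau\|c_\tau\|^2\le\|\varphi\|^2$ does not drop out of $d^*\varphi=0$ in the simple way you suggest---in the actual argument the constant parts are handled via a separate identity relating $\|d^*\varphi\|^2$ (or the localized version thereof) to the $c_\tau$, and the constants that appear depend delicately on the chosen weights; (ii) your line ``a parallel estimate applies to $\langle d^*\varphi,d^*\varphi\rangle$'' is superfluous once $\varphi$ is harmonic, since then $d^*\varphi=0$ already; the contradiction comes from combining $d\varphi=0$ with the strict lower bound on $\sum_\tau\langle\Delta^{\Lk(\tau)}\varphi_\tau,\varphi_\tau\rangle$. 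With those bookkeeping points fixed (exactly the ``normalization issues'' you flag), the argument goes through as in \cite{ballmannswiat}.
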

 
We need another tool before proving Theorem \ref{t:randombetti}, namely the following estimate from \cite{hkp} on spectral gaps of edge-independent random graphs.
 
 \begin{theorem}
\label{thm:ergap} 
Let $G\sim G(n,p)$ be a Bernoulli random graph. Let $\gD$ denote the normalized Laplacian of $G$, and let $\lambda_1 \leq \lambda_2 \leq \cdots \leq \lambda_n$ be the eigenvalues of $\gD.$  For every fixed $\alpha \geq 0$, there is a constant $\widetilde{C}_{\alpha}$ depending only on $\alpha$, so that if
$$p \ge  \frac{(\alpha+1)\log n + \widetilde{C}_{\alpha} \sqrt{\log n} \log\log n}{n}$$  
then $G$ is connected and 
$$\lambda_2 (\gG) > 1- o(1),$$
with probability $1 - o(n^{-\alpha})$.
\end{theorem}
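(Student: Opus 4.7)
The plan is to combine concentration of the vertex degrees with concentration of the centered adjacency matrix, the standard route to normalized-Laplacian spectral gap estimates for $G(n,p)$. Write $A$ for the adjacency matrix and $D$ for the diagonal matrix of vertex degrees; the normalized Laplacian $\gD = I - D^{-1}A$ is similar to the symmetric matrix $I - D^{-1/2}AD^{-1/2}$, whose second smallest eigenvalue equals $\lambda_2(\gD)$. Thus it suffices to show that the largest eigenvalue of $D^{-1/2}AD^{-1/2}$ on the orthogonal complement of its Perron eigenvector $D^{1/2}\mathbf{1}$ is $o(1)$ with probability $1 - o(n^{-\alpha})$.

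First I would establish uniform degree concentration. Each $\deg(v)$ is $\mathrm{Binomial}(n-1,p)$ with mean $(n-1)p \geq (\alpha+1)\log n + \widetilde{C}_\alpha\sqrt{\log n}\log\log n - o(1)$. A Chernoff bound gives, for $t \leq np$,
\[
\prob[\,|\deg(v) - (n-1)p| > t\,] \le 2\exp(-ct^2/(np)),
\]
and choosing $t$ of order $\sqrt{np\log n}$ and union-bounding over the $n$ vertices shows every vertex has degree $(1+o(1))np$ with probability $1 - o(n^{-\alpha})$. In particular no vertex is isolated, so $G$ is connected. The auxiliary correction $\widetilde{C}_\alpha\sqrt{\log n}\log\log n$ in the hypothesis on $p$ is calibrated precisely to make this union bound survive.

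Second, I would control the centered matrix $A - \expect[A]$, which is symmetric with independent, mean-zero, Bernoulli-type entries of variance $p(1-p) \approx p$. The Kahn--Szemer\'edi trace-moment argument---or a modern matrix-concentration tool such as the Bandeira--van Handel inequality, combined with a truncation that discards contributions from rare high-degree vertices---yields
\[
\|A - \expect[A]\|_{\mathrm{op}} = O(\sqrt{np})
\]
with probability $1 - o(n^{-\alpha})$. Combining this with the degree bound via
\[
D^{-1/2}AD^{-1/2} = D^{-1/2}\expect[A]D^{-1/2} + D^{-1/2}(A-\expect[A])D^{-1/2}
\]
and observing that $\expect[A] = p(J-I)$, where $J$ denotes the all-ones matrix, contributes essentially a rank-one term of eigenvalue close to $1$ in the Perron direction (with negligible eigenvalue $-1/n$ on the orthogonal complement), the remaining spectrum of $D^{-1/2}AD^{-1/2}$ is bounded above in absolute value by
\[
\frac{\|A-\expect[A]\|_{\mathrm{op}}}{\min_v \deg(v)} + o(1) = O\!\left(\frac{1}{\sqrt{np}}\right) = o(1).
\]
Hence $\lambda_2(\gD) \ge 1 - o(1)$ as claimed.

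The main obstacle is achieving the sharp probability bound $1 - o(n^{-\alpha})$ throughout the regime $p \approx \log n/n$, where binomial degrees are not subgaussian and Wigner-type spectral concentration for $A - \expect[A]$ ordinarily provides only polynomial tail control. The delicate technical step is the spectral estimate in the second stage: one must calibrate the truncation threshold in the Kahn--Szemer\'edi argument against the lower-order correction $\widetilde{C}_\alpha\sqrt{\log n}\log\log n$ so that both the deterministic degree lower bound and the random spectral bound on $A - \expect[A]$ hold simultaneously outside an event of probability $o(n^{-\alpha})$.
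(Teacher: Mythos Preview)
The paper does not prove this theorem at all: it is quoted as an external input from \cite{hkp} (``the following estimate from \cite{hkp} on spectral gaps of binomial random graphs''), so there is no proof in the paper to compare against. Your sketch is in fact the standard route to such a result---degree concentration plus operator-norm control of the centered adjacency matrix, combined via $D^{-1/2}AD^{-1/2}$---and is essentially the strategy used in the cited source.

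One small gap worth flagging: from ``no vertex is isolated'' you cannot conclude directly that $G$ is connected; a graph can have minimum degree $\ge 1$ and still be disconnected. In the random setting this is harmless, since once you have established $\lambda_2(\gD) > 1 - o(1) > 0$ the graph is automatically connected (positivity of $\lambda_2$ characterizes connectivity), so the separate connectivity assertion is a consequence of the spectral bound rather than a prerequisite for it. Alternatively one can invoke the classical Erd\H{o}s--R\'enyi fact that above the connectivity threshold the obstruction to connectivity is, with the required probability, precisely isolated vertices. Either way the fix is immediate; the genuinely hard step, as you correctly identify, is pushing the Kahn--Szemer\'edi-type spectral estimate on $A-\expect[A]$ down to $p$ of order $(\log n)/n$ with failure probability $o(n^{-\alpha})$, and that is where the second-order term $\widetilde{C}_\alpha \sqrt{\log n}\,\log\log n$ in the hypothesis earns its keep.
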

 
 \begin{proof}[Proof of Theorem \ref{t:randombetti}]
 
The first claim is that $\dim (X -\sigma) = d$ for every simplex $\sigma \in X$.  When $\sigma = \emptyset$  this is a standard result about random graphs 
--- if $p$ is in the given regime, then w.h.p.\ there are $d$-simplices (i.e., cliques of order $d+1$) but no $(d+1)$-simplices (i.e., cliques of order $d+2$), which is exactly the claim.

We include a proof here of the case of an arbitrary $\sigma$ for the sake of completeness.
First consider the case $\gs=\emptyset$.    The claim that $X(n,p)$ is w.h.p.\ $d$-dimensional is equivalent to showing that $X(n,p)$ w.h.p.\ contains a simplex on $d+1$ vertices, but contains no simplices on $d+2$ vertices.  This is a special case of standard results on subgraphs of random graphs \cite{Bollo}.  We recall the proof here.

Let $f_{i-1}$ be the number of simplices on $i$ vertices.  
The expected value is given by 
	\begin{equation}\label{e:f}
	\expect[f_{i-1}] = { n \choose i} p^{i \choose 2}.
	\end{equation}
If $p  \ll n^{-2/(d+1)}$,  then  
\begin{align*}
\expect[f_{d+1}]  & = { n \choose d+2} p^{d+2 \choose 2}\\
& \le n^{d+2}  \left( n^{-2/(d+1)-\epsilon} \right)^{d+2\choose 2}\\
&=  n^{-c_1}, 
\end{align*}
where $c_1= \epsilon {d+2\choose 2} >0$.  By Markov's inequality, $f_{d+1} =0$ w.h.p.  It follows that $\dim X \le d$. 

On the other hand, if $p \gg  n^{-2 / d}$ then
	\begin{align*}
	\expect[f_d]  & = { n \choose d+1} p^{d+1 \choose 2}\\
	& \ge \frac{(1-o(1))}{(d+1)!} n^{c_2} \\
	\end{align*}
where $c_2= \epsilon {d+1 \choose 2} > 0 $.

%
Janson's inequality \cite{Alon} gives for this range of $p$ that
$$\prob[ f_d \le (1/2) \expect(f_d)] \le e^{-n^{c_2}/6}.$$
We can apply this argument separately to each of the subcomplexes $X-\sigma$.  
Since $X$ is w.h.p.\ $d$-dimensional, there are w.h.p. at most $O(n^{d+1})$ faces total.  Applying a union bound, the total probability that any one of these complexes fails to be $d$-dimensional is at most   
\[
O(n^{d+1}) e^{-n^{c_2}/6 }= o(1).
\]

\bigskip
 
For the second claim, that 
 $\wH_i(X - \sigma;\qq) = 0$ whenever $i \neq k$, we extend the ideas from \cite{clique} and \cite{k12} which were used to prove this in the case $\sigma = \emptyset$.  The proof has two parts:  first we check that  $\wH_i(X - \sigma;\qq) = 0$ when $i > k$ and then when $i < k$.

The proof that  $\wH_i(X;\qq) = 0$ when $i > k$ in \cite[Section 5]{clique} is to show first that for this range of $p$, homology is $\aas$ generated by cycles supported on simplices which are supported on a bounded number of vertices as $n \to \infty$, and then that all such cycles are boundaries.  
The same argument goes through verbatim to show that this also holds for every subcomplex of $X$.  In particular, $\wH_i(X - \sigma;\qq) = 0$ for every simplex $\gs$ and with $i > k$.

The proof that $\wH_i(X;\qq) = 0$ when $i < k$ in \cite{k12} uses Theorem \ref{t:garland}.
For any  $\gs\in  X$, write $\Lk(\gs)$ as short for $\Lk(\gs,X)$.   It is shown in \cite{k12} that, for this range of $p$, the $(k +1)$-skeleton of $X$ is w.h.p.\  pure, and  that w.h.p.\ for every $(k+1)$-simplex $\alpha \in X$, $\lambda_2 (\alpha)  > 1-o(1)$.  (Here we are considering the link of $\alpha$ in the $(k +1)$-skeleton of $X$, i.e., as a graph.)  In particular, all these graphs are connected.

We extend this proof to show that $\wH_i(X - \sigma;\qq) = 0$  for $i < k$ and for all $\gs \in X$ by applying Theorem \ref{t:garland} to each of the subcomplexes $X-\gs$.  The link of a codimension-$2$ face in the $(i+1)$-skeleton $X-\sigma$ is still a Bernoulli random graph, and we can use Theorem \ref{thm:ergap}.   Since $ n^{-1/k} \ll p$, the probability that any of these graphs have spectral gap $\lambda_2 < 1- \epsilon$ is $o(n^{-\alpha})$ for every fixed $\alpha > 0$.

On the other hand, w.h.p.\  $X$ is $d$-dimensional, where $d=2k or 2k+1$, so there are $O(n^{2k+2})$ simplices in total.  Applying a union bound, the probability that any of the polynomially many random graphs arising as the link of a simplex in a deleted subcomplex has small spectral gap tends to zero.  Then Theorem \ref{t:garland} gives that  w.h.p.\ $\wH_i(X - \sigma;\qq) = 0$ for every face $\sigma$ and $i < k$.
 \end{proof}

 We also need the following in \S\ref{ss:infinite}.
 \begin{theorem}\label{t:links}
 Let $X \sim X(n,p)$ where $n^{-1/k}\ll p$ for a given integer $k\ge 0$.
With high probability, the following properties hold for all simplices $\gs\in X$ of dimension $< k$, with $l:=\dim\gs +1$.
	 \begin{enumerate1}
	 \item
	 $\dim \Lk(\gs) \ge 2k - 2l$.
	\item 
If $i < \lfloor (k-l)/2 \rfloor$, then $\wH^i(\Lk(\gs);\qq)= 0$. 
 \end{enumerate1}
 \end{theorem}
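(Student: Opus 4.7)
The central observation is that, conditioning on $\sigma \in X$, the link $\Lk(\sigma, X)$ is itself a random flag complex on a random vertex set. Fix $\sigma \subseteq [n]$ with $|\sigma| = l$; given that all $\binom{l}{2}$ edges of $\sigma$ are present (an event independent of the remaining edges of $G(n,p)$), each vertex $v \notin \sigma$ lies in the common neighborhood $N(\sigma)$ iff the $l$ edges from $v$ to $\sigma$ are all present, which is an independent Bernoulli event with probability $p^l$. Moreover, the induced subgraph on $N(\sigma)$ is still $G(|N(\sigma)|, p)$, independently of $N(\sigma)$ itself. Thus $\Lk(\sigma, X) \stackrel{d}{=} X(n_\sigma, p)$ with $n_\sigma := |N(\sigma)|$, and a Chernoff bound yields $n_\sigma = (1+o(1)) n p^l$ except with probability $\exp(-\Omega(np^l))$, which is superpolynomially small since $np^l \ge n^{\epsilon}$ for some $\epsilon > 0$ in the given regime.

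To exploit this, write $p = n^{-\alpha}$, so that $\alpha$ lies polynomially strictly inside $(2/(d+1), 2/d)$. Set $d' := d - 2l$, which is nonnegative because $l \le \lfloor d/2 \rfloor$. A direct calculation then verifies
\[
(n_\sigma)^{-2/d'} \ll p \ll (n_\sigma)^{-2/(d'+1)} :
\]
the left inequality rearranges to $\alpha d < 2$ with polynomial slack, and the right to $\alpha(d+1) > 2$ with polynomial slack, both of which are exactly the hypotheses on $p$. Hence $X(n_\sigma, p)$ falls under Theorem \ref{t:randombetti} with dimension parameter $d'$, and that theorem delivers, with high probability, $\dim \Lk(\sigma) = d - 2l$ (proving claim (1)) together with $\wH_i(\Lk(\sigma); \qq) = 0$ for $i \ne \lfloor (d-2l)/2 \rfloor$, which by universal coefficients over $\qq$ gives claim (2). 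The boundary case $d' = 0$ (which arises only if $d$ is even and $l = d/2$) is handled separately: there claim (2) is vacuous, and claim (1) reduces to the assertion that $\Lk(\sigma)$ is nonempty, which is immediate from $n_\sigma \to \infty$.

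Finally, these per-$\sigma$ w.h.p.\ statements must be combined into the uniform statement over all $\sigma \in X$ with $\dim\sigma < \lfloor d/2 \rfloor$, of which there are at most $O(n^{\lfloor d/2 \rfloor})$. This is the main obstacle: Theorem \ref{t:randombetti} is stated as a bare w.h.p.\ bound, and its proof must be inspected to extract polynomial failure probabilities strong enough to absorb a union bound over polynomially many simplices. Each of its ingredients does supply such quantitative bounds---Markov's inequality on the expected count of $(d'+1)$-simplices gives failure of order $n^{-c}$; Janson's inequality for the existence of $d'$-simplices gives exponentially small failure; and the spectral-gap input of Theorem \ref{thm:ergap} can be tuned to $o(n^{-\alpha_0})$ for any fixed $\alpha_0 > \lfloor d/2 \rfloor$---so a union bound completes the argument.
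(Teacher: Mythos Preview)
Your proof is correct and follows essentially the same strategy as the paper's. The paper, like you, uses that conditioned on $\sigma\in X$ the link is a random flag complex on $\approx np^l$ vertices with edge probability $p$; it then directly computes the expected number of extensions of $\sigma$ and applies Janson for~(1), and for~(2) says the argument is ``almost identical'' to the Garland/spectral-gap proof of Theorem~\ref{t:randombetti}, with the same $O(n^{-\alpha})$ failure probability and union bound over $O(n^d)$ simplices. Your version packages this as a reduction to Theorem~\ref{t:randombetti} with parameter $d'=d-2l$ (verifying $(n_\sigma)^{-2/d'}\ll p\ll (n_\sigma)^{-2/(d'+1)}$ explicitly) and then unpacks the quantitative bounds, which is a slightly cleaner organization of the same argument.
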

 
 \begin{proof}[Proof of Theorem \ref{t:links}]
 
 The proof of (1) is similar to the proof of statement (1) of Theorem \ref{t:randombetti}.
 
Given a simplex $\sigma \in X(n,p)$ on $l$ vertices, let $N_m$ denote the number of extensions of $\sigma$ to a simplex on $ l+m$ vertices.  This would require a choice of $m$ new vertices out of a possible $n-l$, and then there are $${m+l \choose 2} - {l \choose 2}$$ 
 new edges that must appear.  By linearity of expectation, 
 	\begin{align*}
	 \mathbb{E}[N_m] & = {n - l \choose m} p^{ {l+m \choose 2} - {l \choose 2}}\\
 	& \approx \frac{n^m}{m!} p^{ lm + {m \choose 2} }\\
 	&  = \frac{1}{m!} (np^{l + (m-1) / 2} )^m.\\
 	\end{align*}
Setting $m = 2k - 2l + 1$ gives
$ \mathbb{E}[N_m] = \Theta (np^{mk} )$. 
Since, by assumption, $ p \gg n^{-1/k }$,  $ \mathbb{E}[N_m] \to \infty$.

Janson's inequalities, for example, give that  $$\mathbb{P}[N_m =0] = O(e^{-cn})$$ for some constant $c> 0$.  Since  $\aas$ there are only polynomially many simplices $\sigma$, a union bound gives (1).

The proof of (2) is almost identical to the proof in Theorem \ref{t:randombetti} that $\wH_i(X - \sigma;\qq) = 0$ for every simplex $\sigma$ and for $i < k$.  In particular, there are still only $O(n^{2k})$ simplices $\sigma$ and for each, the probability of failure is $O(n^{-\alpha})$ for every fixed $\alpha > 0$.  So, a union bound shows that the total probability failure is $o(1)$.
 \end{proof} 
 
\paragraph{Some remarks about nonrandom examples.}
Examples of simplicial complexes satisfying the conclusions of Theorems~\ref{t:randombetti} and \ref{t:links} might not spring readily to mind.  Similar properties hold for Cohen-Macaulay complexes, except that for these, the homology is concentrated in the top dimension rather than in the middle.  One can construct examples of complexes satisfying the conclusions Theorems~\ref{t:randombetti} and \ref{t:links} by ``thickening'' certain Cohen-Macaulay complexes.

Let $R$ be a nonzero principal ideal domain (e.g.,  $\zz$ or $\qq$).  A $k$-dimensional complex $Y$ is \emph{Cohen-Macaulay over $R$}  
if for each $\gs\in  Y$, $\wH_i(\Lk(\gs,Y);R)=0$ for $i<k-\dim \gs-1$ and is $R$-torsion-free for $i=k-\dim\gs-1$.  (When $\gs=\emptyset$, $\Lk(\gs,Y)=Y$; so, in this case the condition means that $\wH_i(Y;R)$ is concentrated in degree $k$.) In other words, the link of any $l$-simplex in $Y$ has the same homology as a wedge of $(k-l-1)$-spheres.  A finite simplicial complex $Y$ (of any dimension) has \emph{punctured homology concentrated  in degree $k$}  (with coefficients in $R$) if for each $\gs\in  Y$, $\wH_i(Y-\gs;R)$ is nonzero only in degree $k$ and is $R$-torsion-free in that degree.  
Cohen-Macaulay complexes satisfy a condition similar to the conclusion of Theorem~\ref{t:links} except that the cohomology is concentrated in the top dimension rather than in the middle.  In Theorem~\ref{t:randombetti} we are concerned with the concentration of punctured homology.  Many (but not all) $k$-dimensional Cohen-Macaulay complexes  have the punctured homology concentrated in degree $k$.  For example, any $k$-dimensional spherical building is Cohen-Macaulay and has punctured homology concentrated in degree $k$ (cf. \cite[Thm.~A]{schulz}).  An example of such a spherical building is given by taking the join of any collection of $k+1$ finite sets.

Suppose $Y$ is a $k$-dimensional Cohen-Macaulay complex with concentrated punctured homology.  We can thicken $Y$ to a complex $\wh{Y}$ of dimension $2k$ or $2k+1$ by iterating the procedure of replacing each vertex with a tree (or a forest). This means that we replace the star of a vertex $v$ by the join of the link of $v$ and a forest.  If we do this at each vertex, then $\dim \wh{Y}=2k+1$. By not replacing one vertex of each top-dimensional simplex, we get a $2k$-dimensional $\wh{Y}$.  For example, when $Y$ is a join of finite sets, $\wh{Y}$ is a join of forests.  It is then straightforward to check that such $\wh{Y}$ satisfy the conclusions of Theorems~\ref{t:randombetti} and \ref{t:links}.

\section{Random graph products of groups}\label{s:calc}
As usual, $G\sim G(n,p)$, $X\sim X(n,p)$ and $\cg\sim \cg(G(n,p),\ug)$. 

\subsection{The case where each $\gG_i$ is finite}\label{ss:Gfinite}
In this subsection,  $\gG_i$ is finite of order $q_i+1$ (i.e., $\ug$ has order $\bq+\bone$).  As  we noted in \S \ref{ss:trivial}, the group $\cg_0:=\Ker(\cg\to\ug^{[n]})$ is torsion-free and it acts freely on the universal cover of  the finite complex $\zx(\cone \ug,\ug)$.  Moreover, this universal cover is contractible. So, $\cg_0$ is type F. Since the index of $\cg_0$ in $\cg$ is finite, $\cg$ is type VF.   
 
Let $\car$ denote the region of convergence for $W_{G(n,p)}(\bt)$.

\begin{lemma}\label{l:car2}
If  \,$\sum_{i=1}^\infty (q_i+1)^\minus  < 1$, then $\bone/\bq\in \car$.
\end{lemma}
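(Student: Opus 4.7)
The plan is to reduce this to the ``no edges'' case, which is handled by the explicit description \eqref{e:noedge2}, and then to invoke Lemma~\ref{l:car}. Specifically, for any graph $G$ on vertex set $[n]$, Lemma~\ref{l:car} says that $\car_G \supseteq \car_{V[n]}$, where $V[n]$ is the edgeless graph, and \eqref{e:noedge2} identifies $\car_{V[n]}\cap[0,\infty)^n$ as the set
\[
\Bigl\{\bt\in [0,\infty)^n \,\Big|\, \sum_{i=1}^n \frac{t_i}{1+t_i}<1\Bigr\}.
\]
So it suffices to verify that the point $\bone/\bq=(1/q_i)_{i\in[n]}$ lies in this set.

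The verification is a short algebraic computation. Substituting $t_i=1/q_i$ gives
\[
\frac{t_i}{1+t_i}=\frac{1/q_i}{1+1/q_i}=\frac{1}{q_i+1},
\]
so the membership condition becomes $\sum_{i=1}^n (q_i+1)^{-1}<1$. Since every partial sum of a series of positive terms is bounded above by the whole sum, the hypothesis $\sum_{i=1}^\infty(q_i+1)^{-1}<1$ yields $\sum_{i=1}^n(q_i+1)^{-1}<1$, which is exactly what is needed. Combining with Lemma~\ref{l:car} gives $\bone/\bq\in\car_{V[n]}\subseteq\car_G=\car$ for our $G=G(n,p)$.

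There is no real obstacle here: the only moving parts are the elementary identity $\frac{1/q_i}{1+1/q_i}=\frac{1}{q_i+1}$ and the monotonicity of $\car_G$ in edge-deletion already proven in Lemma~\ref{l:car}. The one small point worth remarking in writing this up is that the statement is not probabilistic — it holds for \emph{every} graph on $[n]$, in particular almost surely for $G(n,p)$ — so no random graph input is required.
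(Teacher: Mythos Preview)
Your proof is correct and follows essentially the same approach as the paper: substitute $t_i=1/q_i$ to get $t_i/(1+t_i)=1/(q_i+1)$, note that the partial sums are bounded by the infinite sum, and invoke Lemma~\ref{l:car} together with \eqref{e:noedge2}. The only difference is that you spell out a few details (the partial-sum bound, the non-probabilistic nature of the statement) that the paper leaves implicit.
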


\begin{proof}
Set $t_i=1/q_i$.  Then $t_i/(1+t_i) =1/(q_i+1)$.  So, if the sum in the lemma is less than $1$, then for all $n\in \nn$,
\[
\sum_{i=1}^{n} \frac{t_i}{(1+t_i)}< 1.
\]
Then, by Lemma~\ref{l:car}, $\bone/\bq \in \car$.
\end{proof}
For example, the conclusion of Lemma~\ref{l:car2} holds if $q_i +1 \ge 2^i$ for all $i\in \nn$.

We begin with some results about the Euler characteristic and $L^2$-Betti numbers of $\cg$.
\begin{proposition}\label{p:randomeuler}

\begin{enumerate1}
\item\label{i:euler}
The rational Euler characteristic of $\cg$ is given by
	\begin{equation*}
	\chi(\cg)=\frac{\chi(B\cg_0)}{(\bone+\bq)_{\n}} 
	=f_{X(n,p)}\left(\frac{-\bq}{\bone+\bq}\right)
	= \frac{\ah_{X(n,p)}(-\bq)}{(\bone +\bq)_{\n}}.
	\end{equation*}
where $\ah_{X(n,p)}$ is defined by \eqref{e:ah}.
\item\label{i:cox}
Let $W_G=\cg(G,\zz/2)$ be the random right-angled Coxeter group.  Then the Poincar\'e series of $BW_G$ with coefficients in $\bF_2$ is given by
\[
	\sum_{i=0}^\infty b_i(BW_G;\ftwo) t^i =f_X\left(\frac{t}{1-t}\right).
\]

\item\label{i:L2}
Suppose $\sum_{i=1}^{\infty}\, (q_i+1)^\minus < 1$. 
Then the $L^2$-Betti numbers $\ltwob_m(\cg)$ are given by
	\[
	\ltwob_m(\cg) =\sum_{\gs\in  X} b_m(\cone X, X-\gs;\qq)\cdot D_\gs (\bq),
	\]
where $D_\gs(\bq)$ is defined by \eqref{e:dgs2}. 
\end{enumerate1}
\end{proposition}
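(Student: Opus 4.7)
The plan is to observe that all three parts of Proposition~\ref{p:randomeuler} are essentially direct applications of the structural results for graph products already established in \S\ref{s:coh}, specialized to the random graph $G = G(n,p)$ and its flag complex $X = X(n,p)$. No random-graph arguments (concentration, thresholds, etc.) are needed for this particular proposition; its content is simply to record the formulas in the random setting, highlighting Coxeter and $L^2$ specializations. In this sense there is no serious obstacle to overcome; the only subtlety is verifying the convergence hypothesis in part (\ref{i:L2}).

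For part (\ref{i:euler}), I would apply Proposition~\ref{p:pe} with $G = G(n,p)$ and $\ug$ of order $\bq+\bone$. That proposition gives both equalities claimed: the identification of $\chi(B\cg_0)$ with $\hat{h}_{X(n,p)}(-\bq)$ on the one hand and with $(\bone+\bq)_{\n}\,f_{X(n,p)}(-\bq/(\bone+\bq))$ on the other, together with the relation $\chi(\cg) = \chi(B\cg_0)/(\bone+\bq)_{\n}$ coming from the fact that $\cg_0$ has index $(\bone+\bq)_{\n}$ in $\cg$. Nothing further is required.

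For part (\ref{i:cox}), I would specialize the graph product formula to the constant sequence $\gG_i = \zz/2$, so that $\cg(G,\ug) = W_G$ is the random right-angled Coxeter group. The Poincar\'e series identity follows immediately from \eqref{e:cox}: the identity
\[
\sum_{i=0}^\infty b_i(BW_G;\ftwo)\,t^i = f_{X(n,p)}\left(\frac{t}{1-t}\right) = \frac{h_{X(n,p)}(t)}{(1-t)^{d+1}}
\]
was already derived for arbitrary simplicial graphs, and no probabilistic ingredient enters.

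For part (\ref{i:L2}), the statement is Proposition~\ref{p:finL2} applied to $G(n,p)$, but that proposition requires the hypothesis $\bone/\bq \in \car_{G(n,p)}$ on the region of convergence of $W_{G(n,p)}(\bt)$. The one thing worth spelling out is that the hypothesis $\sum_{i\ge 1}(q_i+1)^{-1} < 1$ suffices for this: by Lemma~\ref{l:car2}, the assumption on the sum already forces $\bone/\bq$ into the region of convergence of $W_{V[n]}$ for every $n$, and by Lemma~\ref{l:car} the region of convergence only grows when edges are added, so $\bone/\bq \in \car_{G(n,p)}$ deterministically (not just $\aas$). With this verified, Proposition~\ref{p:finL2} yields the formula in part (\ref{i:L2}) verbatim. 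The proof is therefore a short chain of citations; the only conceptual point is to separate the deterministic convergence input (handled by Lemmas~\ref{l:car} and \ref{l:car2}) from the formulas themselves.
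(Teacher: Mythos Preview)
Your proposal is correct and follows essentially the same approach as the paper: the paper's proof is a one-line citation of Proposition~\ref{p:pe}, equation~\eqref{e:cox}, and Proposition~\ref{p:finL2} for parts (\ref{i:euler}), (\ref{i:cox}), and (\ref{i:L2}) respectively. Your explicit invocation of Lemma~\ref{l:car2} to verify the convergence hypothesis in part (\ref{i:L2}) is a helpful addition that the paper leaves implicit.
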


\begin{proof}
Statements \eqref{i:euler}, \eqref{i:cox} and \eqref{i:L2} follow from Proposition~\ref{p:pe}, equation \eqref{e:cox} and Proposition~\ref{p:finL2}, respectively.
\end{proof}

No assumption  on $p$ is made in the above proposition.  The quantities in the equations are all random variables.  The expected values of these quantities can be made completely explicit.  For example, as we saw in \eqref{e:f}, the expected number of $(i-1)$-simplices is given by 
$\expect[f_{i-1}(X)] = { n \choose i} p^{i \choose 2}$.

Recall that a group $\gG$ is a \emph{rational duality group} of \emph{formal dimension} $m$ if it is type $\mathrm{FP}_\qq$ and if $H^*(\gG;\qq \gG)$ is nonzero only in degree $m$.  If this is the case, then, for $D=H^m(\gG;\qq \gG)$ and for any $\qq \gG$-module $M$, $H^i(\gG;M)\cong H_{m-i}(\gG; D\otimes M)$.  

The next result is one of our principal theorems.  It follows from Theorem~\ref{t:randombetti}\,(2) and the results in \S\ref{ss:gpring} and \S\ref{ss:L2betti}.
\begin{theorem}\label{t:Gfin} Fix an integer $k\ge 0$ and suppose
\(
n^{-1/k } \ll p \ll  n^{-1/(k+1) }. 
\)
Then the following properties hold $\eaas$.
\begin{enumerate1}
\item\label{i:middle}
$H^i(\cg;\qq\cg)\neq 0$ if and only if $i=k +1$. Hence, $\cg$ is a rational duality group of formal dimension $k +1$.
\item\label{i:ends}
	\[
	\Ends \cg=
	\begin{cases}
	\infty,		&\text{if $k=0$;}\\
	1, 		&\text{if $k\ge 1$}.
	\end{cases}
	\]
\item\label{i:cd}
The cohomological dimension of $\cg$ over $\qq$  is given by $\cd_\qq \cg =k +1$.  Over $\zz$, the virtual cohomological dimension of $\cg$ 
is either $k +1$ (if $H_{k}(X-\gs)$ is torsion-free for all $\gs\in  X$) or $k +2$ (if $H_{k}(X-\gs)$ has nontrivial torsion for some $\gs\in  X$). 
\item\label{i:L2fin}
Suppose that  $\sum_{i=1}^{\infty}\, (q_i+1)^\minus < 1$. 
Then $\ltwob_m(\cg)$ is nonzero only when  $m=k +1$
\end{enumerate1}
\end{theorem}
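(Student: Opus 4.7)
The strategy is to feed the vanishing results of Theorem~\ref{t:randombetti} into the graph-product cohomology formulas of Propositions~\ref{p:gpringfin} and \ref{p:finL2}. The key observation is that, for every simplex $\gs$ of $X=X(n,p)$, the subcomplex $X-\gs$ $\aas$ has reduced rational homology concentrated in the single degree $\dtwo$; this collapses each direct-sum formula to the single cohomological degree $\dtwo+1$.

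For (\ref{i:middle}) and (\ref{i:cd}), I rationalize Proposition~\ref{p:gpringfin} to obtain
\[
H^k(\cg;\qq\cg)\cong\bigoplus_{\gs\in X} H^k(\cone X, X-\gs;\qq)\otimes_\qq(\hat{A}^\gs\otimes_\zz\qq),
\]
and use $H^k(\cone X, X-\gs;\qq)\cong \wH^{k-1}(X-\gs;\qq)$, which vanishes $\aas$ unless $k=\dtwo+1$. Nonvanishing at $k=\dtwo+1$ is witnessed by the $\gs=\emptyset$ term, invoking the $\aas$ nonvanishing of the middle rational Betti number $\tilde b_\dtwo(X;\qq)$ from \cite{clique,k12}. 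Because Proposition~\ref{p:pp}\,(2) gives $\cg_0$ a finite classifying space, $\cg$ is virtually type F and hence type $\mathrm{FP}_\qq$; so this is a rational duality statement with formal dimension $\dtwo+1$ and $\cd_\qq\cg=\dtwo+1$. The integral vcd comes from Corollary~\ref{c:endsvcd}\,(2) together with the integral vanishing in Remark~\ref{r:integer}: $H_i(X-\gs;\zz)=0$ for $i>\dtwo$, so the universal coefficient theorem forces $H^k(X-\gs;\zz)=0$ for $k>\dtwo+1$ and identifies $H^{\dtwo+1}(X-\gs;\zz)$ with the torsion subgroup of $H_\dtwo(X-\gs;\zz)$, producing the stated two-case answer.

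For (\ref{i:ends}), I invoke Corollary~\ref{c:endsvcd}\,(1). When $d\ge 2$ we have $\dtwo\ge 1$, so Theorem~\ref{t:randombetti} gives $\wH_0(X-\gs;\qq)=0$ for every $\gs$; since $\wH_0$ is always torsion-free, each $X-\gs$ is $\aas$ connected and $\Ends\cg=1$. When $d=1$, $p\ll n^{-1}$ is well below the Erd\H os--R\'enyi connectivity threshold, so $G(n,p)$ and hence $X(n,p)$ is $\aas$ disconnected, giving $\wH^0(X)\neq 0$ and $\Ends\cg=\infty$. The degenerate cases in Corollary~\ref{c:endsvcd}\,(1)---$X$ a simplex, or $X$ a suspension of a simplex with $\zz/2$ vertex groups at the suspension vertices---do not occur $\aas$ for $p\to 0$. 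Part (\ref{i:L2fin}) runs in parallel: Lemma~\ref{l:car2} places $\bone/\bq$ in $\car_G$ under the summability hypothesis, so Proposition~\ref{p:finL2} applies; the same concentration argument kills every term with $m\neq\dtwo+1$, and at $m=\dtwo+1$ the positivity of each $D_\gs(\bq)$---from \eqref{e:dgs2} together with positivity of the growth series on positive reals within its region of convergence---combined with the positive $\gs=\emptyset$ contribution makes the sum strictly positive.

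The main obstacle is bookkeeping the degree shift between $H^k(\cone X, X-\gs)$ and $\wH^{k-1}(X-\gs)$ and, for (\ref{i:cd}), tracking integral versus rational coefficients through the universal coefficient theorem, since Theorem~\ref{t:randombetti} is only a rational statement while vcd is intrinsically integral. The $d=1$ end count requires a separate classical input on subcriticality of $G(n,p)$ that is not part of the development in \S\ref{s:rflag}, and is the only place where the apparatus of the preceding sections does not immediately apply.
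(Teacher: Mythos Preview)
Your proposal is correct and follows essentially the same approach the paper intends: the paper does not supply a formal proof of Theorem~\ref{t:Gfin}, leaving it to the reader to combine Theorem~\ref{t:randombetti} with Propositions~\ref{p:gpringfin} and~\ref{p:finL2} and Corollary~\ref{c:endsvcd}, and its Remark~(a) after the theorem gives exactly your Universal Coefficient argument for the integral $\vcd$ dichotomy in part~(\ref{i:cd}). Your treatment of the $d=1$ end count, the exclusion of the degenerate cases in Corollary~\ref{c:endsvcd}\,(1), and the positivity of $D_\gs(\bq)$ via \eqref{e:dgs2} are all appropriate elaborations of steps the paper leaves implicit.
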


\begin{proof}\eqref{i:middle} By Proposition~\ref{p:gpringfin}, $H^i(\cg;\qq\cg)$ is a sum of rational vector spaces of the form $\wH^{i-1}(X-\gs;\qq) \otimes (\hat{A}_\gs\otimes\, \qq)$ where $X\sim X(n,p)$.  So, $H^i(\cg;\qq\cg)\neq 0$  if and only if $\wH^{i-1}(X-\gs;\qq)\neq 0$ for some simplex $\gs$.  By Theorem~\ref{t:randombetti}\,(2), $\wH^{i-1}(X-\gs;\qq)\neq 0$ $\aas$ only for $i=k+1$.

\eqref{i:ends} By Corollary~\ref{c:endsvcd}\,(1), $\Ends(\cg)$ is either $1$ or $\infty$ depending on whether $\wH^0(X-\gs;\qq)$ is zero or not zero.  (The case $\Ends(\cg)=2$ does not occur $\aas$ for $X\sim X(n,p)$.)  By Theorem~\ref{t:randombetti}\,(2) (or the
Erd\H{o}s--R\'enyi Theorem) $\wH^0(X-\gs;\qq)\neq 0$ only when $k=0$.

\eqref{i:cd} As in \S\ref{ss:gpring}, $\cd_\qq(\cg)$ is the largest integer $i$ such that for some simplex $\gs$, $\wH^{i-1}(X-\gs;\qq)\neq 0$.  As before, the largest such $i$ is $k+1$.  As explained in Remark~\ref{remarks}\,(a) below, the second sentence of \eqref{i:cd} follows from Corollary~\ref{c:endsvcd}\,(2).

\eqref{i:L2fin} By Lemma~\ref{l:car2},  $\bone/\bq\in \car$.  By Proposition~\ref{p:randomeuler} \eqref{i:L2}, $\ltwob_m(\cg)\neq 0$ if and only if $b_m(\cone X, X-\gs;\qq)\neq 0$ and by Theorem~\ref{t:randombetti}\,(2), this happens only for $m=k+1$.
\end{proof}

\begin{remarks}\label{remarks}
(a) As in Remark~\ref{r:integer}, the integral homology $H_i(X-\gs)$ vanishes for $i \le (k-1)/2$ or $i > k$. 
Hence, $\wH^i(\cg;\zz \cg)=0$ for $i \le (k+1)/2$ or $i > k +2$.
With regard to statement (\ref{i:cd}) of Theorem~\ref{t:Gfin}, if $H_{k}(X-\gs)$ is torsion-free, then, by the Universal Coefficient Theorem, $H^{k+1}(X-\gs)=0$.  Hence, if $H_{k}(X-\gs)$ is torsion-free for all  $\gs\in X$, then,  by Proposition~\ref{p:gpringfin}, $H^i(\cg;\zz\cg)=0$ for all $i>k +1$.  On the other hand, if $H_{k}(X-\gs)$ has torsion for some simplex $\gs$, then $H^{k+1}(X-\gs)=\Ext(H_{k}(X-\gs),\zz)\neq 0$ and hence, $H^{k +2}(\cg;\zz\cg)\neq 0$.  

One could speculate that $\aas$ $H_i(X-\gs)$ is torsion-free for all $i$ and for all $\gs\in  X$, i.e.,  that $\wH_i=0$ for $i\neq k$ and that $H_i$ is torsion-free for $i=k$ (cf.~Remark~\ref{r:integer}).
If this is true, then $\cg_0$ is an (integral) duality group of formal dimension $k +1$.   In other words, $\cg$ would be a virtual duality group of dimension $k +1$.  

(b)   By statement (\ref{i:cd}) of the theorem, $\cd_\qq \cg_0=k +1$. On the other hand, in Proposition~\ref{p:cg0} we computed the homology of $B\cg_0$ in terms of $H_*(\cone X(I), X(I))$ where $I$ ranges over all subsets of $\n$ which are not vertex sets of simplices.  Hence, (\ref{i:cd}) necessarily entails that $\aas$ $\wH_i(X(I);\qq)=0$, for $i>k$.  The proof of Remark~\ref{r:integer}  given in \cite{clique}  gives a stronger statement with integral coefficients:  $\wH_i(X(I))=0$ for $i>k$ (see \cite[Proof of Thm.~3.6, p.~1667]{clique}).

(c)  It follows from Proposition~\ref{p:randomeuler}\,(\ref{i:euler}) that the sign of $\chi(\cg)$ is  $(-1)^{k +1}$  $\eaas$.  
To see this, first suppose that $\ug$ is the constant sequence, $\gG_k=\gG$, where $\gG$ is a nontrivial finite group.  Then the sign of $\chi(\cg)$ is determined by the fact that the coefficients $f_i$ of the $f$-polynomial are dominated by $f_{k}$.  In fact, for $i\neq k$, $ f_{k}/f_i \to \infty$ as $n\to \infty$.   Moreover, since the order of $\gG$ is an integer $\ge 2$, we have $q\ge 1$.  Hence, the argument of $f_X(\frac{-q}{1+q})=\sum f_i (\frac{-q}{1+q})^{i+1}$ lies between $-1$ and $-1/2$.  Since the absolute value of this  is bounded away from $0$, it follows that the formula for $\chi(\cg)$ is dominated by the term with coefficient $f_{k}$, so $\aas$ its sign is  $(-1)^{k +1}$.  The same argument works when the sequence $\ug$ is not constant.
\end{remarks}

\subsection{The case where each $\gG_i$ is infinite}\label{ss:infinite}
In this subsection, we suppose each $\gG_i$ is infinite.  Once again we begin with some facts about Euler characteristics and $L^2$-Betti numbers.

\begin{proposition}\label{p:randominfinite}\hfil
\begin{enumerate1}
\item\label{i2:euler} 
Suppose each $\gG_i$ is type FL. Let $e_i=e(\gG_i):=\chi(B\gG_i)-1$ be the reduced Euler characteristic of $B\gG_i$, and put $\be=(e_i)_{i\in \nn}$.  Then
	\(
	\chi(\cg)=f_X(\be).
	\)
\item\label{i2:ltwo}  
		\[
			\ltwob_l(\cg)=\sum_{\substack{\gs\in  X\\
			i+m=l}} b_i(\cone \Lk(\gs),\Lk(\gs))\cdot \ltwob_{\gs,m},
		\]
where $ \ltwob_{\gs,m}$ is defined by \eqref{e:bgs}.
\end{enumerate1}
\end{proposition}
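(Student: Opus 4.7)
The plan is to observe that both statements are direct specializations of results already proved in Section~\ref{s:coh} for general graph products; no probabilistic input is needed at this stage, since the formulas hold for any graph $G$ on vertex set $[n]$, and we simply set $G = G(n,p)$ to make the two sides of each identity into random variables.

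For part~(\ref{i2:euler}), I would cite Proposition~\ref{p:euler1} directly. The type FL hypothesis guarantees that $\chi(\gG_i)$ is well-defined, so that $e_i$ and hence the polynomial evaluation $f_X(\be)$ make sense. The identity $\chi(\cg) = f_X(\be)$ itself follows from combining Proposition~\ref{p:pp}\,(1), which identifies $B\cg$ with the polyhedral product $\zxg(B\ug,\ul{*})$, with the Euler characteristic formula for polyhedral products of \cite{dpe}. Substituting $X = X(n,p)$ is then tautological.

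For part~(\ref{i2:ltwo}), I would invoke Proposition~\ref{p:infL2} verbatim. Its derivation in \cite{do10} runs through the spectral-sequence decomposition recorded in formula~(\ref{e:gr1}): since von Neumann dimension is additive across exact sequences of Hilbert modules, the $L^2$-Betti numbers of $\cg$ can be read off from the associated graded. The K\"unneth formula for $L^2$-Betti numbers, recalled in~(\ref{e:bgs}), packages the contribution of the $\gs$-fold product $\ug^{\gs}$ as $\ltwob_{\gs,m}$, and summing over $\gs \in X$ yields the claimed identity. Specializing to $G = G(n,p)$ is again immediate.

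The main bookkeeping point, which is essentially the only ``obstacle,'' is confirming that the blanket assumption that each $\gG_i$ is infinite is exactly what permits us to apply Proposition~\ref{p:infL2}; the mixed case would require different (and more complicated) formulas, cf.~\cite{do10}. Since this hypothesis is built into the statement of Proposition~\ref{p:randominfinite}, the proof reduces to a two-line citation of the earlier propositions, with no essential probabilistic or combinatorial work required at this stage.
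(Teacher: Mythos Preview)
Your proposal is correct and matches the paper's own proof, which is literally the one-line citation ``Statements (\ref{i2:euler}) and (\ref{i2:ltwo}) follow from Propositions~\ref{p:euler1} and \ref{p:infL2}, respectively.'' The additional commentary you provide about why those propositions apply and about specializing to $G=G(n,p)$ is accurate but more than the paper bothers to say.
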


\begin{proof}
Statements (\ref{i2:euler}) and (\ref{i2:ltwo}) follow from Propositions~\ref{p:euler1} and \ref{p:infL2}, respectively.
\end{proof}

\begin{remark}
With regard to the formula in Proposition~\ref{p:randominfinite}\,(\ref{i2:ltwo}), $\Lk(\gs)$ can be empty, in which case $\cone \Lk(\gs)$ is a point. 
\end{remark}

When each $\gG_i=\zz$, $\cg\sim \cg(G(n,p),\zz)$ is the random right-angled Artin group $A_G$ associated to the random graph $G\sim G(n,p)$.   Using \eqref{e:ag}, \eqref{e:raag} and Corollary~\ref{c:raag}, we get the following.

\begin{corollary} \textup{(cf.~\cite[Thm.~3.2.4]{cd05}, \cite[Lemma 1]{cof}, \cite{dl}).}
With trivial coefficients the cohomology of $A_G$ is $\aas$ the random exterior face ring $\bigwedge [X]$.  In particular, $b_l(A_G)=f_{l-1}(X)$ and $\chi(A_G)=\chi(\cone X, X)=-e(X)$, where $e$ means reduced Euler characteristic.
\end{corollary}

\begin{theorem}\label{t:infinite} Fix an integer $k\ge0$, suppose
\(
n^{-1/k} \ll p \ll  n^{-1/(k+1) } 
\)
and that $d=\dim X(n,p)$.  
Then the following hold $\eaas$.
\begin{enumerate1}
\item\label{i2:middle}
For $i < k +1$, $H^i(\cg;\qq\cg)=0$ and $H^{k+1}(\cg;\qq\cg)\neq0$.
\item\label{i2:ends}
	\[
	\Ends \cg=
	\begin{cases}
	\infty,		&\text{if $k=0$;}\\
	1, 		&\text{if $k\ge 1$}.
	\end{cases}
	\]
\item\label{i2:cd}
Suppose further that the cohomological dimension of each $\gG_i$ is finite and is equal to $\max\{l\mid H^l(\gG_i;\zz \gG_i)\neq 0\}$.  (This holds, for example, if $\gG_i$ is type FP.)   Then 
	\(
	\cd\cg\le(d+1) \sup\{\cd\gG_i\}
	\).
If $\ug$ is the constant sequence, $\gG_i=\gG$, then $\cd\cg= (d+1)\cd\gG$.  Here, as before, $d=2k$ when $p\le o(n^{-2/(2k+1)})$ or $d=2k+1$ when $p\ge \go(n^{-2/(2k+1)})$. 
\end{enumerate1}
\end{theorem}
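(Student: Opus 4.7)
My plan hinges on Proposition~\ref{p:gpringinf}, which writes the associated graded of $H^*(\cg;\zz\cg)$ as a direct sum indexed by simplices $\gs\in X$ of summands $H^i(\cone\Lk(\gs),\Lk(\gs);\,H^j(\ug^\gs;\zz\cg))$ with $i+j=m$. Writing $l=|\gs|$, I will bound the smallest and largest values of $m$ admitting a nonzero contribution, using Theorem~\ref{t:links} to control $\Lk(\gs)$, Theorem~\ref{t:randombetti} to control $X$ itself, and the vanishing $H^0(\gG_i;\zz\gG_i)=0$ characteristic of infinite groups. Because the indexing is finite, vanishing/non-vanishing at the $E_\infty$ level transfers to $H^*(\cg;\zz\cg)$ in each total degree.

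For part~(\ref{i2:middle}), the K\"unneth-type identity displayed just after \eqref{e:gr1} gives $H^j(\ug^\gs;\qq\cg)=0$ whenever $j<l$, since every $H^0(\gG_i;\qq\gG_i)=0$. When $l\le\dtwo$, Theorem~\ref{t:links}\,(1) makes $\Lk(\gs)$ nonempty w.h.p., so $H^0(\cone\Lk(\gs),\Lk(\gs))=0$, and Theorem~\ref{t:links}\,(2) gives $H^i(\cone\Lk(\gs),\Lk(\gs);\qq)\cong\wH^{i-1}(\Lk(\gs);\qq)=0$ for $i\le\lfloor(d-2l)/2\rfloor=\dtwo-l$. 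A nonzero summand therefore forces $i+j\ge\dtwo+1$; when $l>\dtwo$, the single bound $j\ge l\ge\dtwo+1$ already suffices. For nonvanishing at $m=\dtwo+1$, the $\gs=\emptyset$ summand equals $\wH^{\dtwo}(X;\qq)\otimes_\qq\qq\cg$; Theorem~\ref{t:randombetti} concentrates the reduced rational homology of $X$ in degree $\dtwo$, while $\chi(X)=1+(-1)^{\dtwo}\dim\wH^{\dtwo}(X;\qq)$ is dominated by the $f_{\dtwo}(X)$ term and hence unbounded in $n$, forcing $\wH^{\dtwo}(X;\qq)\ne 0$ w.h.p.

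Part~(\ref{i2:ends}) is immediate: for $d\ge 2$, part~(\ref{i2:middle}) forces $H^1(\cg;\qq\cg)=0$, so $\Ends\cg=1$; for $d=1$, the condition $p\ll n^{-1}$ gives $np\to 0$, hence $G(n,p)$ has $\Theta(n)$ isolated vertices and is disconnected w.h.p., so $\cg$ is a nontrivial free product of graph products each of which contains an infinite $\gG_i$ and is itself infinite, and Stallings' theorem yields $\Ends\cg=\infty$. For (\ref{i2:cd}), put $c_i=\cd\gG_i$ and $c=\sup c_i\ge 1$ (infiniteness of each $\gG_i$ forces $c_i\ge 1$). In the $\gs$-summand, $i\le\dim\cone\Lk(\gs)\le d-l+1$ and the K\"unneth-type identity gives $j\le lc$, so $i+j\le d+1+l(c-1)$, maximized at $l=d+1$, yielding $\cd\cg\le(d+1)c$. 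For the matching lower bound when $\gG_i=\gG$ is constant, pick a $d$-simplex $\gs$ of $X$ (existing w.h.p.\ by Theorem~\ref{t:randombetti}\,(1)); since no $(d+1)$-simplices exist, $\Lk(\gs)=\emptyset$, so the $(i,j)=(0,(d+1)c)$-summand is $H^{(d+1)c}(\gG^{d+1};\zz\cg)$, which by the same identity contains the nonzero tensor $\bigotimes_{i\in\gs}H^c(\gG;\zz\gG)$, certifying $\cd\cg\ge(d+1)c$.

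The principal obstacle is the interplay with empty links: $H^0(\cone\Lk(\gs),\Lk(\gs))=\zz$ precisely when $\Lk(\gs)=\emptyset$. This behaviour is exactly what supplies the lower bound in (\ref{i2:cd}) via a maximal simplex, but it would undermine the vanishing in (\ref{i2:middle}) if it could occur at small $l$. Theorem~\ref{t:links}\,(1) rules this out in the needed range $l\le\dtwo$, and checking that it applies uniformly over the polynomially many $\gs$ via a union bound is the real content. The remaining ingredients---that $\wH^{\dtwo}(X;\qq)\ne 0$ w.h.p., that $d$-simplices exist w.h.p., that $G(n,p)$ has isolated vertices when $d=1$, and that the spectral sequence of Proposition~\ref{p:gpringinf} is Hausdorff in each total degree---are standard and follow either from arguments already assembled in Section~\ref{s:rflag} or from routine union bounds.
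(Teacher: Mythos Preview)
Your proof is correct and follows essentially the same approach as the paper: both rely on Proposition~\ref{p:gpringinf} together with the vanishing $H^0(\gG_i;\zz\gG_i)=0$ for infinite $\gG_i$, the K\"unneth formula for $\ug^\gs$, and Theorems~\ref{t:randombetti} and~\ref{t:links} to bound the contributing bidegrees. You supply somewhat more detail than the paper does---in particular, an explicit Euler-characteristic argument for $\wH^{\dtwo}(X;\qq)\neq 0$, a careful treatment of empty links via Theorem~\ref{t:links}\,(1), and a Stallings-theorem argument for $\Ends\cg=\infty$ when $d=1$---but the underlying strategy is the same.
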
 

\begin{proof}
Basically,  this follows from the formula in Proposition~\ref{p:gpringinf}.  Here are the details.  Since $\gG_i$ is infinite, $H^0(\gG_i;\zz \gG_i)=0$.  So, for any $l$-simplex $\gs$, by the K\"unneth Formula, $H^i(\ug^{\gs};\qq \ug^{\gs})=0$ for $i<l$; hence, the same vanishing result holds with $\qq \cg$ coefficients.  So, in the formula of Proposition~\ref{p:gpringinf}, for the terms corresponding to $\gs$, the cohomology groups $H^i(\cone\Lk \gs, \Lk \gs)$ are shifted up in degree by at least $l$.  Comparing this with Theorem~\ref{t:links},  we see that, with $\qq \cg$ coefficients, the first degree for which the right hand side of the formula in Proposition~\ref{p:gpringinf} might not vanish is $l +1$ (since $(2k-2l)/2 +l= k$).  So, (\ref{i2:middle}) holds.  Since the number of ends of $\cg$ are detected by $H^1(\cg;\qq \cg)$,  (\ref{i2:middle}) $\implies$ (\ref{i2:ends}).  The formula in Proposition~\ref{p:gpringinf} also implies (\ref{i2:cd}).  To see this, first note that 
	\[
	\cd \ug^{\gs} = \sum _{i\in \gs} \cd \gG_i.
	\]
So, $\cd \ug^{\gs} \le (\dim \gs+1) \sup \{\cd \gG_i\}$.  The nonvanishing terms in the formula of Proposition~\ref{p:gpringinf} which have highest possible degree occur when $\gs$ is a simplex of highest possible dimension $d$, proving (\ref{i2:cd}).
\end{proof}

\begin{corollary} \label{cor:raag}\textup{(cf. \eqref{e:raag}, Corollary~\ref{c:raag}).}
Fix an integer $k \ge 0$ and suppose
\(
n^{-1/k}\ll p \ll n^{-1/(k+1)}. 
\)
Then the following properties  hold $\aas$ for the random right-angled Artin group $A_G$.
\begin{enumerate1}
\item
$\cd A_G = d+1$ where $d=2k$ (when $\go(n^{-2/(2k+1)})\le p$) or $d=2k+1$ (when $p\le o(n^{-2/(2k+1)})$).
\item
$H^i(A_G;\qq A_G)=0$ for $i< k +1$ or $i>d+1$ and $H^{k+1}(A_G;\qq A_G)\neq 0$
\item
$\ltwob_m(A_G)$ is nonzero if and only if  $m=k+1$. 
\end{enumerate1}
\end{corollary}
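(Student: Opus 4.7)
The plan is to derive all three statements by specializing the general machinery already assembled for infinite factor groups to the case $\gG_i = \zz$, where $\cd\zz = 1$ and $\zz$ is type FP.

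First, for (1), I would apply Theorem~\ref{t:infinite}\,(\ref{i2:cd}) with the constant sequence $\ug = (\zz)_{i\in\nn}$. Since $\cd\zz = 1$ and $H^1(\zz;\zz\zz) \neq 0$, the hypotheses hold, and the formula gives $\cd A_G = (d+1)\cdot 1 = d+1$ whenever $\dim X(n,p) = d$, which holds w.h.p.\ under the assumption on $p$.

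For (2), the lower-degree vanishing $H^i(A_G;\qq A_G) = 0$ for $i < \dtwo + 1$ is immediate from Theorem~\ref{t:infinite}\,(\ref{i2:middle}). The upper-degree vanishing $H^i(A_G;\qq A_G) = 0$ for $i > d+1$ follows from part (1) (or equivalently from Proposition~\ref{p:gpringinf}: the summands for a simplex $\gs$ of dimension $k$ can contribute at most in degree $(\dim\gs + 1) + \dim(\cone\Lk(\gs),\Lk(\gs)) \le d+1$, since $\dim X = d$ w.h.p.).

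For (3), I would invoke Corollary~\ref{c:raag}, which gives $\ltwob_m(A_G) = \tilde b_{m-1}(X;\qq)$. Theorem~\ref{t:randombetti} applied to the simplex $\gs = \emptyset$ shows that $\tilde H_i(X;\qq) = 0$ for $i \neq \dtwo$ w.h.p., which handles the vanishing in all degrees except $m = \dtwo + 1$. For nonvanishing in that degree, I would use Proposition~\ref{p:randominfinite}\,(\ref{i2:euler}) specialized to $\gG_i = \zz$: since $e(\zz) = \chi(S^1) - 1 = -1$, the formula gives $\chi(A_G) = f_X(-\bone) = -\tilde\chi(X)$, equivalently $\chi(A_G) = \sum_i (-1)^{i+1} f_i(X)$. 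Under the assumption $n^{-2/d} \ll p \ll n^{-2/(d+1)}$, the coefficient $f_{\dtwo}(X)$ dominates all other $f_k(X)$ w.h.p.\ (by the standard moment estimate $\expect[f_k] = \binom{n}{k+1}p^{\binom{k+1}{2}}$ from the proof of Theorem~\ref{t:randombetti}), so $\chi(A_G) \neq 0$ w.h.p. Combined with the vanishing of $\tilde b_i(X;\qq)$ for $i \neq \dtwo$, this forces $\tilde b_{\dtwo}(X;\qq) \neq 0$, hence $\ltwob_{\dtwo+1}(A_G) \neq 0$.

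The only mildly nontrivial point is the nonvanishing in (3), which is why I would route it through the Euler characteristic rather than trying to exhibit a nontrivial cycle directly; everything else is a bookkeeping specialization of results already proved in the excerpt.
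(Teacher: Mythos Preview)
Your proof is correct and follows essentially the same route as the paper's: for (1) and (2) the paper cites \eqref{e:raag} and Theorem~\ref{t:links}\,(2) directly, whereas you quote the packaged Theorem~\ref{t:infinite} (which is proved from exactly those ingredients); for (3) both arguments reduce to Corollary~\ref{c:raag} and Theorem~\ref{t:randombetti}\,(2). Your Euler-characteristic argument for the nonvanishing $\tilde b_{\dtwo}(X;\qq)\neq 0$ is a point the paper's one-line proof leaves implicit (it is essentially Remark~(c) after Theorem~\ref{t:Gfin}, specialized to $\gG=\zz$), so your version is slightly more self-contained.
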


\begin{proof}
Statements (1) and (2) follow from \eqref{e:raag} and Theorem~\ref{t:links}\,(2). (Statement (1) was first proved in \cite[Thm.\,4]{cof}.)  Statement (3) follows from Corollary~\ref{c:raag} and Theorem~\ref{t:randombetti}\,(2).
\end{proof}

\bibliography{dk_refs}{}
\bibliographystyle{plain}

\obeylines
Michael W. Davis, Department of Mathematics, The Ohio State University, 231 W. 18th Ave., Columbus Ohio 43210  {\tt davis.12@math.osu.edu} 
Matthew Kahle, Department of Mathematics, The Ohio State University, 231 W. 18th Ave., Columbus Ohio 43210  {\tt mkahle@math.osu.edu } 
\end{document}